\swapnumbers\newtheorem{theorem}{Theorem}[section]
\newtheorem{lemma}[theorem]{Lemma}
\newtheorem{corollary}[theorem]{Corollary}
\newtheorem{affirmation}[theorem]{Affirmation}
\newtheorem{proposition}[theorem]{Proposition}
\newtheorem{example}[theorem]{Example}
\newtheorem{problem}[theorem]{Problem}
\newtheorem{question}[theorem]{Question}
\theoremstyle{definition}
\newtheorem{remark}[theorem]{Remark}
\numberwithin{equation}{section}
\newcommand{\Cl}{\mathrm{Cl}}
\def\NN{{\hbox{{\rm I$\!$\rm N}}}}
\def\tto{\ \hbox{$\to\!\!\!\!\!\to$}\ }
\newcommand{\N}{\mathbb{N}}
\newcommand{\T}{\mathcal{T}}
\newcommand{\C}{\mathcal{C}}
\newcommand{\G}{\mathcal{G}}
\newcommand{\HH}{\mathcal{H}}
\newcommand{\OO}{\mathcal{O}}
\newcommand{\V}{\mathcal{V}}
\begin{document}

\date{}
\title{Orbit sets, transitivity, and sensitivity with upper semicontinuous maps}

\author{Jeison Amorocho, Javier Camargo and Sergio Mac\'{\i}as
\thanks{ {\it 2020 Mathematics Subject Classification}: 37B20, 54C60, 54B20.
\newline
\textit{Key words and phrases.} Orbit sets, upper semicontinuous functions, discrete dynamic system, transitivity, sensitivity.\hfil\break
}}

\maketitle

\begin{abstract}
Given a compact metric space $X$ and an upper semicontinuous function $F\colon X \to 2^X$, we explore the dynamic system $(X,F)$. In this study, we introduce new concepts, demonstrate various results, and provide numerous examples. In particular, we define the orbit set $\mathcal{O}_F(p)$ and prove that it is compact. We also establish conditions for connectedness of the orbit sets and pose several questions related to the system.

 We also investigate transitivity and its relation to the density of orbits. In addition, we present strong and weak notions of sensitivity and examine the relationships between these concepts.  
\end{abstract}

\section{Introduction}\label{Intro}

A discrete dynamic system is defined as a pair $(X, f)$, where $X$ is a compact metric space, and $f\colon X\to X$ is a continuous function. Discrete dynamical systems can be extended to include set-valued functions $F\colon X \to 2^X$, where $2^X$ represents the collection of all closed nonempty subsets of $X$, and $F$ is an upper semicontinuous function. Thus, a multivalued discrete dynamical system is a pair $(X,F)$, where $X$ is a compact metric space, and $F\colon X\to 2^X$ is an upper semicontinuous function. This generalisation has been explored in various studies, including \cite{CMM}, \cite{CM}, \cite{CGMO}, and \cite{RT}. If the upper semicontinuous function $F$ satisfies $F(x) = \{ y_x \}$ for each $x \in X$, then the function $f \colon X \to X$ defined by $f(x) = y_x$ is continuous. Thus, upper semicontinuity can be seen as a natural extension of continuity in the context of multivalued functions. Moreover, all the dynamical concepts introduced in this paper for multivalued functions coincide with the classical definitions when the multivalued function is indeed a single-valued function.

The paper is divided into six sections. After this Introduction, we have a section~\ref{Def} of definitions,
where we include the material needed for the paper.
We investigate the dynamical system $(X, F)$, where $X$ is a compact metric space and $F\colon X \to 2^X$ is an upper semicontinuous function. We introduce new concepts, prove various results, and provide numerous examples throughout our study. 
Specifically, for a point $p \in X$, we define the orbit set $\mathcal{O}_F(p)$ as the collection of all sequences $(x_n)_{n\in\mathbb{N}} \in X^{\mathbb{N}}$ such that $x_1 = p$ and $x_{n+1} \in F(x_n)$ for every $n \in \mathbb{N}$.  In Section~\ref{Orbits}, we explore certain topological properties of the orbit set, present examples, and pose questions related to the family of orbits originating from a given point. In particular, in Theorem~\ref{theo87g5}, we establish that the orbit set $\mathcal{O}_F(p)$ is compact. Moreover, we prove that if $F(x)$ is a continuum for every $x \in X$, then $\mathcal{O}_F(p)$ is also a continuum (see Theorem \ref{theom}).

Section~\ref{MoreOrbits} is dedicated to the presentation of notable examples of orbit sets and to the investigation of the relationship between orbits and inverse limits. In Proposition \ref{ex0}, an upper semicontinuous function $F\colon [0,1] \to 2^{[0,1]}$ is defined such that the orbit set $\mathcal{O}_F(p)$ constitutes an infinite-dimensional continuum for some $p\in [0,1]$, thereby illustrating its complexity.

Furthermore, in Examples \ref{exm8uh6} and \ref{exoif6hdt7}, two distinctive dendrites are provided, which can be interpreted as orbit sets when $F$ is defined in the closed interval $[0, 1]$. These examples not only serve to exemplify the diverse structures that orbit sets can assume, but also encourage further investigation into their characteristics.

In Section~\ref{DensOrbTrans}, we define the concepts of transitivity in the context of an upper semicontinuous function and establish conditions for the existence of either a dense orbit or a weak dense orbit. This section focusses on comparing these three concepts. Additionally, towards the end of the section, we introduce the terms ``dense minimal'' and ``weak dense minimal.'' Theorem \ref{theodminimal} shows that these two notions are equivalent.

In Section~\ref{Sensitive}, we explore various ways to introduce the concept of sensitivity in the context of multivalued functions. We propose several definitions related to sensitivity: strongly sensitive, sensitive, weakly sensitive, and Li-Yorke sensitive. According to our definitions, Proposition \ref{prop8uy61} establishes that ``strong sensitivity" implies ``sensitivity," and ``sensitivity" implies ``weak sensitivity." Additionally, Proposition \ref{propbshdgst} shows that ``Li-Yorke sensitivity" implies ``sensitivity." We also provide examples that demonstrate that the converse implications do not hold in general.  

\section{Definitions}\label{Def}

A \textit{compactum} is a compact metric space.
A {\it continuum} is a connected compactum. A
{\it subcontinuum} is a continuum contained in a metric space. An \textit{arc} is any continuum homeomorphic to $[0,1]$. A \textit{simple closed curve}
is a continuum homeomorphic to $S^1=\{z\in \mathbb C : |z|=1\}$. A \textit{dendrite} is a locally connected continuum that does not contain simple closed curves. 
A \textit{map} is a continuous function. Given a continuum $X$, we say that it is an \textit{arc-like continuum} provided that for each $\varepsilon>0$, there exists an onto map $f\colon X\to [0,1]$ such that $\mathrm{diam}(f^{-1}(t))<\varepsilon$, for each $t\in [0,1]$. 
A continuum $X$ is {\it decomposable} if there exist two proper subcontinua $A$ and $B$ of $X$ such that $X=A\cup B$. If $X$ is not decomposable, we say that $X$ is {\it indecomposable}.
A continuum $X$ is said to be {\it aposyndetic} provided that for each $p$ of $X$ and for any $q\in X\setminus\{p\}$, there exists a subcontinuum $W$ of $X$ such that $p\in \mathrm{Int}(W)$ and $q\notin W$. Also, $X$ is {\it homogeneous} if for any $x,y\in X$, there exists a homeomorphism $h\colon X\to X$ such that $h(x)=y$.
We denote $2^X$ as the family of all compact nonempty subsets of $X$. 

Given a compactum $X$ and a map $f\colon X\to X$, then:
\begin{itemize}
    \item For each $p\in X$, the \textit{orbit of }$p$ is the sequence $\mathcal{O}_f(p)=\{p,f(p),f^{2}(p),\ldots\}$, where $f^{n}=f\circ f\circ \cdots \circ f$, $n$ times.
    \item We say that $f$ is \textit{transitive} if for each pair of open sets $U$ and $V$, there exists $z\in U$ such that $f^m(z)\in V$, for some $m\in\mathbb N$.
    \item A point $p\in X$ has a \textit{ dense orbit} if the set $\mathcal{O}_f(p)$ is a dense subset of $X$.
    \item $f$ is \textit{dense minimal} provided that $x$ has dense orbit, for each $x\in X$.
\end{itemize}

A proof of the following result can be found in \cite[Proposition~39, p.155]{Block}.

\begin{proposition}\label{propB}
    Let $X$ be a compactum with no isolated points and let $f\colon X\to X$ be a map. Then $f$ is transitive if and only if there exists $p\in X$ such that $p$ has dense orbit.
\end{proposition}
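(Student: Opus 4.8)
The plan is to prove the two implications separately, handling the direction from transitivity to the existence of a dense orbit by a Baire category argument, and the reverse direction by a direct orbit-tracking argument.

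For the implication that a dense orbit yields transitivity, I would argue directly. Suppose $p$ has dense orbit and let $U,V$ be nonempty open sets. By density there is an index $n\ge 0$ with $f^n(p)\in U$; set $z=f^n(p)\in U$. It then suffices to produce an index $k>n$ with $f^k(p)\in V$, for then $f^{k-n}(z)=f^k(p)\in V$ with exponent $k-n\ge 1$. This is exactly where the hypothesis that $X$ has no isolated points is used: a nonempty open subset of a compactum without isolated points is infinite, so if the orbit met $V$ only finitely often, say with largest index $N$, then the tail $\{f^j(p): j>N\}$ would lie in the closed set $X\setminus V$, forcing $\overline{\mathcal{O}_f(p)}\subseteq \{f^0(p),\ldots,f^N(p)\}\cup(X\setminus V)$. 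Density would then require the infinite open set $V$ to be covered by finitely many orbit points, a contradiction. Hence the orbit meets $V$ infinitely often and a suitable $k>n$ exists.

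For the converse, I would invoke a category argument. Since $X$ is a compact metric space it is second countable; fix a countable base $\{U_i\}_{i\in\mathbb N}$ of nonempty open sets and set $G_i=\bigcup_{n\ge 0}f^{-n}(U_i)$. Each $G_i$ is open because $f$ is continuous, and each $G_i$ is dense: given a nonempty open $V$, transitivity applied to the pair $V,U_i$ furnishes a point $z\in V$ and an $m\ge 1$ with $f^m(z)\in U_i$, so $z\in f^{-m}(U_i)\cap V\subseteq G_i\cap V$. A compact metric space is complete, hence a Baire space, so $\bigcap_i G_i$ is dense and in particular nonempty. Any $p\in\bigcap_i G_i$ satisfies: for every $i$ there is $n$ with $f^n(p)\in U_i$; since $\{U_i\}$ is a base, the orbit of $p$ meets every nonempty open set and is therefore dense.

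The main obstacle is the converse direction, which is nonconstructive: rather than building the transitive point explicitly, one produces it as a member of a dense $G_\delta$ set, and the work lies in verifying openness and, crucially, density of each $G_i$ directly from the transitivity hypothesis. I would also take care to note explicitly that the no-isolated-points assumption is genuinely needed in the forward direction to guarantee a \emph{positive} return time; without it, a dense orbit (for instance an orbit that runs once through a finite attracting cascade) need not force transitivity.
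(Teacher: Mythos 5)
Your proof is correct. The paper does not prove this proposition itself but cites Block--Coppel for it, and your argument (a direct infinite-return argument using the absence of isolated points for one direction, and the Baire category construction of a transitive point via the dense open sets $\bigcup_{n}f^{-n}(U_i)$ for the other) is precisely the standard proof given in that reference.
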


Given a compactum $X$, we define Professor Jones' 
{\it Set Function $\T$} as follows: if $A$ is a subset of $X$, then
\begin{multline}\label{defT} 
\T(A)=X\setminus\{x\in X : \hbox{\rm there exists a subcontinuum}\\
\hbox{\rm $W$ of $X$ such that $x\in Int(W)\subset W\subset$ $X\setminus A$}\}.
\end{multline} 
Note that if $W$ is a subcontinuum of $X$, then $\T(W)$ is a subcontinuum
of $X$ \cite[Theorem~2.1.27]{MS}. Hence, $\T(\{x\})$ is a subcontinuum of $X$ for each $x\in X$. The set function $\T$ is {\it idempotent on closed sets} if
$\T^2(A)=\T(A)$, for all closed $A$ of $X$. For a comprehensive study of this topic, see \cite{MS}.

\section{Orbit Sets}\label{Orbits}

Given $X$ a compactum and a function $F\colon X\to \mathcal{P}(X)$, we define the \textit{graph} of $F$ by $$\mathrm{Gr}(F)=\{(x,y)\in X\times X : y\in F(x)\}.$$
For each $z\in X$, let
\begin{equation}
    \mathcal{O}_F(z)=\left\{(x_n)_{n\in\mathbb N}\in X^{\mathbb N} : x_1=z \text{ and }x_{n+1}\in F(x_n), \text{ for each }n\in\mathbb N\right\}.
\end{equation}
A sequence $(x_i)_{i=1}^{\infty}$ is called an \textit{orbit} of $z$ provided that $(x_i)_{i=1}^{\infty}\in \mathcal{O}_F(z)$. We say that $\mathcal{O}_F(z)$ is the \textit{orbit set of $z$}. In this section, we study some topological properties of $\mathcal{O}_F(z)$.

\medskip

Given $F\colon X\to \mathcal{P}(X)$ and a subset $A$ of $X$, we define $F(A)=\bigcup\{F(x) : x\in A\}$. If $G\colon X\to \mathcal{P}(X)$ is a function, then we define $G\circ F\colon X\to \mathcal{P}(X)$ by $(G\circ F)(x)=G(F(x))$, for each $x\in X$.
Hence, inductively for every $n\in\mathbb N$, we define $F^n\colon X\to \mathcal{P}(X)$ by:
\begin{equation}\label{eq00}
  F^{n+1}=F\circ F^n.  
\end{equation}

\begin{lemma}\label{lem0}
Let $X$ be a compactum and let $F\colon X\to \mathcal{P}(X)$ be a 
function. If $z\in X$, then 
$$\pi_k(\mathcal{O}_F(z))=F^{k-1}(z),$$ 
for each $k\geq 2$.
\end{lemma}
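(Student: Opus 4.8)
The plan is to prove the two inclusions separately, reducing everything to the identity $F^{k}(z)=F(F^{k-1}(z))$. This identity is immediate from the definitions: by \eqref{eq00} we have $F^{k}=F\circ F^{k-1}$, the composition rule gives $(F\circ F^{k-1})(z)=F(F^{k-1}(z))$, and the convention $F(A)=\bigcup\{F(w):w\in A\}$ lets us expand $F(F^{k-1}(z))=\bigcup\{F(w):w\in F^{k-1}(z)\}$. I also record the base convention $F^{1}=F$ underlying the recursion, so that the case $k=2$ reads $F^{k-1}(z)=F(z)$.

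For the inclusion $\pi_k(\mathcal{O}_F(z))\subseteq F^{k-1}(z)$, I would fix an orbit $(x_n)_{n\in\mathbb{N}}\in\mathcal{O}_F(z)$ and show by induction on $k\geq 2$ that $x_k\in F^{k-1}(z)$. The base case $k=2$ is just the orbit condition $x_2\in F(x_1)=F(z)=F^{1}(z)$. For the inductive step, assuming $x_k\in F^{k-1}(z)$, the orbit condition gives $x_{k+1}\in F(x_k)$, and since $x_k\in F^{k-1}(z)$ we get $x_{k+1}\in\bigcup\{F(w):w\in F^{k-1}(z)\}=F(F^{k-1}(z))=F^{k}(z)$. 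This direction uses nothing beyond the defining relation $x_{n+1}\in F(x_n)$ of an orbit.

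For the reverse inclusion $F^{k-1}(z)\subseteq\pi_k(\mathcal{O}_F(z))$, I would take $y\in F^{k-1}(z)$ and unravel the definition by a finite downward recursion to produce points $z=x_1,x_2,\ldots,x_{k-1},x_k=y$ with $x_{i+1}\in F(x_i)$ for each $1\le i\le k-1$. Indeed, $y\in F^{k-1}(z)=F(F^{k-2}(z))=\bigcup\{F(w):w\in F^{k-2}(z)\}$ produces some $x_{k-1}\in F^{k-2}(z)$ with $y\in F(x_{k-1})$; repeating this descent through $F^{k-2}(z),\ldots,F^{1}(z)=F(z)$ and ending at $x_1=z$ yields the whole finite chain. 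To promote this finite chain to a genuine element of $\mathcal{O}_F(z)$, I would extend it by choosing, for each $n\ge k$, a point $x_{n+1}\in F(x_n)$; the resulting sequence $(x_n)_{n\in\mathbb{N}}$ then lies in $\mathcal{O}_F(z)$ and satisfies $x_k=y$, so $y\in\pi_k(\mathcal{O}_F(z))$.

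The main obstacle is precisely this final extension step: it requires $F(x)\neq\emptyset$ for every $x$, so that the forward tail can always be continued (this also guarantees $\mathcal{O}_F(z)\neq\emptyset$, which is implicitly needed). I would take this nonemptiness from the standing assumption that $F$ takes values in the nonempty subsets of $X$, and the selection of the tail is then an application of the axiom of dependent choice. Everything else is routine bookkeeping with the composition identity $F^{k}(z)=F(F^{k-1}(z))$ and the union convention for $F$ on sets.
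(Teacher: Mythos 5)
Your proof is correct and follows essentially the same route as the paper's: both inclusions are handled by unwinding $F^{k}=F\circ F^{k-1}$, building a finite chain $z=x_1,\ldots,x_k$ backwards from a point of $F^{k-1}(z)$, and then extending it forward to a full orbit using the nonemptiness of the values of $F$. Your explicit remarks on the convention $F^{1}=F$ and on the dependent-choice/nonemptiness point needed for the forward extension are accurate observations about hypotheses the paper uses implicitly, but the argument itself is the same.
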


\begin{proof}
Note that $\pi_2(\mathcal{O}_F(z))=F(z)$. We prove the lemma by 
induction. Suppose that $\pi_k(\mathcal{O}_F(z))=F^{k-1}(z)$, for some 
$k\geq 3$. We show that $\pi_{k+1}(\mathcal{O}_F(z))=F^{k}(z)$. Let $x\in \pi_{k+1}(\mathcal{O}_F(z))$. 
Then there exists $(z, z_2, z_3,\ldots)\in\mathcal{O}_F(z)$ such that $z_{k+1}=x$. 
Hence, $x\in F(z_k)$, where $z_k\in \pi_k(\mathcal{O}_F(z))$. Since 
$\pi_k(\mathcal{O}_F(z))=F^{k-1}(z)$, $z_k\in F^{k-1}(z)$. Thus, 
$x\in F(z_{k})$ and $F(z_{k})\subseteq F(F^{k-1}(z))$. 
Therefore, $x\in F^{k}(z)$ (see (\ref{eq00})). Hence, $\pi_{k+1}(\mathcal{O}_F(z))\subseteq F^k(z)$. 
Conversely, let $x\in F^k(z)$. 
By (\ref{eq00}), there exists $w_k\in F^{k-1}(z)$ such that $x\in F(w_k)$. 
Also, we know that there exists $w_{k-1}\in F^{k-2}(z)$ where $w_k\in F(w_{k-1})$. 
In this way, we find $w_2,\ldots , w_{k}$ such that $w_2\in F(z)$, $x\in F(w_k)$, 
and $w_j\in F(w_{j-1})$, for each $j\in\{3,\ldots,k\}$. 
Now, let $w_{k+2}\in F(x)$, and 
let $w_{k+2},w_{k+3},\ldots$ be such that $w_j\in F(w_{j-1})$ for all 
$j\geq k+3$. 
Thus, we define a sequence $(z,w_2,\ldots)\in \mathcal{O}_F(z)$ where 
$\pi_{k+1}((z,w_2,\ldots))=x$, and $x\in \pi_{k+1}(\mathcal{O}_F(z))$. Thus,  $F^{k}(z)\subseteq \pi_{k+1}(\mathcal{O}_F(z))$.
Therefore, $\pi_{k+1}(\mathcal{O}_F(z))=F^{k}(z)$.
\end{proof}

A function $F\colon X\to \mathcal{P}(X)$ is \textit{upper semicontinuous} provided that for each $x\in X$ and every open subset $V$ of $X$ such that $F(x)\subseteq V$, there exists an open subset $U$ of $X$, where $x\in U$ and $F(y)\subseteq V$, for all $y\in U$. 
We say that $F$ is \textit{lower semicontinuous} if for each $x\in X$ and every open subset $V$ such that $F(x)\cap V\neq\emptyset$, there exists an open subset $U$ such that $x\in U$ and $F(y)\cap V\neq\emptyset$, for each $y\in U$. 

Given a compactum $X$, we denote $2^X$ the family of all nonempty closed subsets of $X$. If $M$ is a subset of $X$, we denote $\langle M\rangle=\{ A\in 2^X : A\subseteq M\}$ and $\langle X, M\rangle =\{A\in 2^X : A\cap M\neq\emptyset\}$. Let $(A_n)_{n=1}^{\infty}$ be a sequence of $\mathcal{P}(X)$. We define:
\begin{multline*}
    \bullet \ \liminf A_n=\{x\in X : \text{ for each open }U\text{ such that }\\ x\in U,\ U\cap A_i\neq\emptyset \text{ for all but finitely many indices }i\};
\end{multline*}
\begin{multline*}
    \bullet \ \limsup A_n=\{x\in X : \text{ for every open }U\text{ such that }\\ x\in U,\ U\cap A_i\neq\emptyset \text{ for infinitely many indices }i\}.
\end{multline*}

\begin{proposition}\label{prophfg6tg}
    Let $X$ be a compactum and let $F\colon X\to 2^X$ be a function. Then the following are equivalent:
    \begin{enumerate}
        \item $F$ is lower semicontinuous;
        \item $F^{-1}(\langle X,V\rangle)$ is open, for each open set $V$ of $X$; and
        \item $F(x)\subseteq \liminf F(x_n)$ for each sequence $(x_n)_{n=1}^{\infty}$ such that $\lim_{n\to\infty}x_n=x$.
    \end{enumerate}
\end{proposition}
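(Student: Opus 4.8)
The plan is to prove the cyclic chain of implications $(1) \Rightarrow (2) \Rightarrow (3) \Rightarrow (1)$, since each single step amounts to unwinding the relevant definition, and the metric structure of $X$ is genuinely needed only in the final step. Throughout I would use the immediate identity $F^{-1}(\langle X,V\rangle) = \{x \in X : F(x)\cap V \neq \emptyset\}$, which follows straight from the definition of $\langle X,V\rangle$.

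For $(1)\Rightarrow(2)$, I would fix an open set $V$ and take any $x \in F^{-1}(\langle X,V\rangle)$, so that $F(x)\cap V \neq \emptyset$. Lower semicontinuity produces an open $U \ni x$ with $F(y)\cap V \neq \emptyset$ for every $y \in U$, which says exactly that $U \subseteq F^{-1}(\langle X,V\rangle)$; hence the preimage is a neighborhood of each of its points and is therefore open. For $(2)\Rightarrow(3)$, I would fix a sequence with $x_n \to x$ and a point $y \in F(x)$, and show $y \in \liminf F(x_n)$. Given any open $U \ni y$, I have $F(x)\cap U \neq \emptyset$, so $x \in F^{-1}(\langle X,U\rangle)$, which is open by $(2)$; convergence $x_n \to x$ then forces $x_n \in F^{-1}(\langle X,U\rangle)$, i.e. $F(x_n)\cap U \neq \emptyset$, for all sufficiently large $n$. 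This is precisely the condition defining membership of $y$ in $\liminf F(x_n)$, and since $y \in F(x)$ was arbitrary we conclude $F(x) \subseteq \liminf F(x_n)$.

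The main obstacle, and the only place where the metrizability of $X$ enters, is $(3)\Rightarrow(1)$, which I would argue by contradiction. Suppose lower semicontinuity fails: there are $x \in X$ and an open $V$ with $F(x)\cap V \neq \emptyset$ such that every neighborhood of $x$ contains a point $y$ with $F(y)\cap V = \emptyset$. Using the countable neighborhood basis $\{B(x,1/n)\}_{n}$ available because $X$ is a compactum, I would select $x_n \in B(x,1/n)$ with $F(x_n)\cap V = \emptyset$, so that $x_n \to x$. Choosing any $y \in F(x)\cap V$, hypothesis $(3)$ gives $y \in \liminf F(x_n)$, whence the open set $V$ meets $F(x_n)$ for all but finitely many $n$; this contradicts $F(x_n)\cap V = \emptyset$ for every $n$.

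The delicate point is simply invoking first countability to replace the ``every neighborhood'' quantifier by a single sequence witnessing the failure of $(1)$; no compactness beyond metrizability of $X$ is actually used. The remaining steps are purely formal translations between the neighborhood formulation, the preimage formulation, and the $\liminf$ formulation, so I expect no further difficulty.
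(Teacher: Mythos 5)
Your proof is correct, and it is genuinely more self-contained than the paper's. The paper dispatches this proposition in two lines: it declares the equivalence of (1) and (2) to be a matter of definition, and it outsources the equivalence of (2) and (3) entirely to a citation of Kuratowski (Theorem 2, p.~62 of \emph{Topology, Vol.~II}). You instead close the cycle $(1)\Rightarrow(2)\Rightarrow(3)\Rightarrow(1)$ with explicit arguments. Your first step coincides with what the paper calls ``by definition,'' once one records the identity $F^{-1}(\langle X,V\rangle)=\{x\in X: F(x)\cap V\neq\emptyset\}$. Your remaining two steps supply the content of the Kuratowski citation: the implication $(2)\Rightarrow(3)$ is a clean translation of openness of preimages into the $\liminf$ condition, and $(3)\Rightarrow(1)$ is where you correctly isolate the one nontrivial ingredient, namely first countability of $X$ (available since a compactum is metric), which lets you convert the failure of lower semicontinuity at a point $x$ into a single sequence $x_n\to x$ with $F(x_n)\cap V=\emptyset$ and then contradict $(3)$. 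The cyclic structure also buys you a small economy: you never need to prove $(3)\Rightarrow(2)$ directly, which would require the same sequential argument in slightly more awkward form. The net effect is a complete proof where the paper offers only a pointer; nothing in your argument is missing or misused.
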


\begin{proof}
    We have, by definition, that \textit{1} and \textit{2} are equivalent. Observe that \cite[Theorem 2, p. 62]{Kur2} gives the equivalence between the statements \textit{2} and \textit{3}.
\end{proof}

\begin{proposition}\label{prop7uh}
    Let $X$ be a compactum and let $F\colon X\to 2^X$ be a function. Then the following are equivalent:
    \begin{enumerate}
        \item $F$ is upper semicontinuous;
        \item $F^{-1}(\langle V\rangle)$ is open, for each open set $V$ of $X$; and
        \item $\limsup F(x_n)\subseteq F(x)$ for each sequence $(x_n)_{n=1}^{\infty}$ such that $\lim_{n\to\infty}x_n=x$;
        \item $\mathrm{Gr}(F)$ is closed.
    \end{enumerate}
\end{proposition}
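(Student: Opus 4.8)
The plan is to prove $1 \Leftrightarrow 2$ straight from the definitions and then to close the cycle $1 \Rightarrow 4 \Rightarrow 3 \Rightarrow 1$, so that all four statements become equivalent. For $1 \Leftrightarrow 2$ I would first unwind the definition of the preimage: since $\langle V\rangle=\{A\in 2^X : A\subseteq V\}$, we have $F^{-1}(\langle V\rangle)=\{x\in X : F(x)\subseteq V\}$. Upper semicontinuity of $F$ says exactly that whenever $F(x)\subseteq V$ with $V$ open there is an open $U\ni x$ with $F(U)\subseteq V$, i.e.\ that every point of $F^{-1}(\langle V\rangle)$ is interior to it; reading this in both directions gives the equivalence of $1$ and $2$ using neither the metric nor compactness.

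For $1\Rightarrow 4$ I would show that $X\times X\setminus\mathrm{Gr}(F)$ is open. Given $(x,y)$ with $y\notin F(x)$, the set $F(x)$ is compact, so there is $\varepsilon>0$ with $d(y,F(x))>2\varepsilon$; then $V=\{z\in X : d(z,F(x))<\varepsilon\}$ is an open set containing $F(x)$ and disjoint from the ball $B(y,\varepsilon)$. Upper semicontinuity yields an open $U\ni x$ with $F(U)\subseteq V$, and then $U\times B(y,\varepsilon)$ is a neighbourhood of $(x,y)$ missing $\mathrm{Gr}(F)$. For $4\Rightarrow 3$, suppose $x_n\to x$ and $y\in\limsup F(x_n)$. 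By definition of $\limsup$, each ball $B(y,1/m)$ meets $F(x_i)$ for infinitely many $i$, so I can choose an increasing sequence of indices $i_m$ and points $y_{i_m}\in F(x_{i_m})\cap B(y,1/m)$. Then $(x_{i_m},y_{i_m})\in\mathrm{Gr}(F)$ and $(x_{i_m},y_{i_m})\to(x,y)$, so closedness of the graph forces $y\in F(x)$; hence $\limsup F(x_n)\subseteq F(x)$.

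Finally, for $3\Rightarrow 1$ I would argue by contraposition. If $F$ is not upper semicontinuous at some $x$, there is an open $V\supseteq F(x)$ such that for each $n$ the ball of radius $1/n$ about $x$ contains a point $x_n$ with $F(x_n)\not\subseteq V$; pick $y_n\in F(x_n)\setminus V$. Since $X$ is compact, I may pass to a subsequence $y_{n_k}\to y$, and because every $y_n$ lies in the closed set $X\setminus V$ we get $y\notin V$, hence $y\notin F(x)$. On the other hand $y_{n_k}\in F(x_{n_k})$ with $y_{n_k}\to y$ shows $y\in\limsup F(x_n)$ while $x_n\to x$, contradicting $3$. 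This completes the cycle.

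The step I expect to be the main obstacle is precisely this last implication $3\Rightarrow 1$ (equivalently, recovering upper semicontinuity from the $\limsup$ or closed-graph side), since it is the only place where compactness of the ambient space is genuinely used: without it a map with closed graph need not be upper semicontinuous, so the compactness hypothesis on $X$ must be invoked to extract the convergent subsequence of $(y_n)$. The remaining implications are essentially bookkeeping with neighbourhoods together with the separation of a point from the compact value $F(x)$. One could alternatively cite the $\limsup$ characterisation from Kuratowski in analogy with Proposition~\ref{prophfg6tg}, but the self-contained argument above seems preferable here.
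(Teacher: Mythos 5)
Your proof is correct, but it takes a genuinely different route from the paper: the authors dispose of this proposition entirely by citation, noting that $1\Leftrightarrow 2$ is the definition and then invoking Kuratowski (Theorem~1, p.~61 and Theorem~4, p.~58 of \cite{Kur2}) for $2\Leftrightarrow 3$ and $2\Leftrightarrow 4$, exactly as they do for the lower semicontinuous analogue in Proposition~\ref{prophfg6tg}. You instead supply a self-contained cycle $1\Rightarrow 4\Rightarrow 3\Rightarrow 1$ together with $1\Leftrightarrow 2$, and each step checks out: the separation of $y$ from the compact set $F(x)$ by an $\varepsilon$-collar gives $1\Rightarrow 4$; the diagonal extraction of $y_{i_m}\in F(x_{i_m})\cap B(y,1/m)$ gives $4\Rightarrow 3$ directly from the paper's definition of $\limsup$; and the contrapositive of $3\Rightarrow 1$ correctly uses compactness of $X$ to extract a convergent subsequence of the witnesses $y_n\in F(x_n)\setminus V$, landing in the closed set $X\setminus V$ and hence outside $F(x)$. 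Your remark that this last implication is the only place where compactness of the ambient space is essential (closed graph does not imply upper semicontinuity without it) is accurate and is a point the paper's citation-style proof leaves implicit. What the paper's approach buys is brevity and consistency with its treatment of Proposition~\ref{prophfg6tg}; what yours buys is a reader-verifiable argument that makes the role of the hypotheses visible. Either is acceptable here.
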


\begin{proof}
    Note that \textit{1} and \textit{2} are equivalent by definition, and that \textit{2} and \textit{3} are equivalent by \cite[Theorem 1, p. 61]{Kur2}. Also, according to \cite[Theorem 4, p. 58]{Kur2}, we have that \textit{2} is equivalent to \textit{4}.
\end{proof}

A function $F\colon X\to 2^X$ is \textit{continuous} provided that $F$ is both lower and upper semicontinuous. 

\begin{remark}
    Given a compactum $X$, the family $\mathcal{B}=\{\langle U\rangle : U \text{ is open of }X\}\cup\{\langle X,V\rangle : V \text{ is open of }X\}$ is a base of Vietoris topology on $2^X$\cite[Definition (0.12)]{N1}. Thus, a function $F\colon X\to 2^X$ is continuous if and only if $F$ is continuous where $2^X$ is endowed with Vietoris topology.
\end{remark}

\begin{remark}
    If $F$ is upper semicontinuous and $A$ is a closed subset of $X$, by the Proposition~\ref{prop7uh} part \textit{3}, we have $F(A)\in 2^X$. Hence, if $F\colon X\to 2^X$ is upper semicontinuous, then $F^n\colon X\to 2^X$ is well defined for each $n\in\mathbb N$ (see (\ref{eq00})).
\end{remark}

\begin{remark}\label{defTT}
    Let $X$ be a compactum and let $T\colon X\to 2^X$ be defined by $T(x)=\T(\{x\})$ for each $x\in X$, where $\T$ is the Jones' function defined in (\ref{defT}). By \cite[Theorem 3.3.1]{MT}, $T$ is an upper semicontinuous function.
\end{remark}

By \cite[Theorem~1.4]{I} and mathematical induction, we obtain following result.

\begin{theorem}\label{theo9iuh67}
    Let $X$ be a compactum and let $F\colon X\to 2^X$ be an upper semicontinuous function. Then $F^{n}\colon X\to 2^X$ is upper semicontinuous, for each $n\in\mathbb N$.
\end{theorem}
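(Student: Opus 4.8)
The plan is to isolate a single composition lemma and then feed it into the induction that is already built into the recursion $F^{n+1}=F\circ F^n$ from (\ref{eq00}). Concretely, I would first prove the statement: if $G,H\colon X\to 2^X$ are upper semicontinuous, then $G\circ H$ is upper semicontinuous. Granting this, the theorem follows by induction on $n$. The base case $F^1=F$ is upper semicontinuous by hypothesis; and if $F^n$ is upper semicontinuous, then $F^{n+1}=F\circ F^n$ is a composition of two upper semicontinuous functions, hence upper semicontinuous. This is precisely the role played by the cited \cite[Theorem~1.4]{I}, so the real content is the composition lemma.

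To prove the composition lemma I would use the characterization in Proposition~\ref{prop7uh}, namely that a function is upper semicontinuous if and only if the preimage of $\langle V\rangle$ is open for every open $V\subseteq X$. Fix an open set $V$ of $X$ and analyze the preimage $(G\circ H)^{-1}(\langle V\rangle)$. Since $(G\circ H)(x)=G(H(x))=\bigcup_{y\in H(x)}G(y)$, one has $(G\circ H)(x)\subseteq V$ if and only if $G(y)\subseteq V$ for every $y\in H(x)$, i.e. if and only if $H(x)\subseteq G^{-1}(\langle V\rangle)$. Writing $W=G^{-1}(\langle V\rangle)$, this gives the identity
\[
(G\circ H)^{-1}(\langle V\rangle)=H^{-1}(\langle W\rangle).
\]
Because $G$ is upper semicontinuous, $W$ is open; because $H$ is upper semicontinuous, $H^{-1}(\langle W\rangle)$ is then open. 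Hence $(G\circ H)^{-1}(\langle V\rangle)$ is open, and by Proposition~\ref{prop7uh} the map $G\circ H$ is upper semicontinuous.

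The only points requiring care — and the ``main obstacle,'' though it is a mild one — are the well-definedness issues. One must know that $H(x)$ and $G(H(x))$ are genuine members of $2^X$ (nonempty closed sets), so that the symbols $\langle V\rangle$, $\langle W\rangle$ and the preimages make sense; this is exactly the earlier remark that an upper semicontinuous map sends closed sets to closed sets, which guarantees that $F^n\colon X\to 2^X$ is well defined. Once that is in place, the entire argument reduces to the single set-theoretic equivalence $G(H(x))\subseteq V \Leftrightarrow H(x)\subseteq G^{-1}(\langle V\rangle)$, combined with the preimage characterization of upper semicontinuity. No metric estimate or limit argument is needed, which is why I would organize the whole proof around the open-preimage formulation rather than, say, the $\limsup$ condition of Proposition~\ref{prop7uh}.
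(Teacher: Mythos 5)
Your proposal is correct and follows essentially the same route as the paper: the paper also reduces Theorem~\ref{theo9iuh67} to the fact that a composition of upper semicontinuous functions is upper semicontinuous (citing \cite[Theorem~1.4]{I}) and then applies induction via $F^{n+1}=F\circ F^n$. The only difference is that you prove the composition lemma yourself, via the identity $(G\circ H)^{-1}(\langle V\rangle)=H^{-1}\bigl(\langle G^{-1}(\langle V\rangle)\rangle\bigr)$, which is exactly the argument the paper itself writes out for the lower semicontinuous analogue in Theorem~\ref{theo8ucpa3} with $\langle V\rangle$ in place of $\langle X,V\rangle$; your attention to the well-definedness of $G(H(x))$ as an element of $2^X$ is also the right point to flag.
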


In Theorem~\ref{theo9yr34rs}, we show a similar result to Theorem~\ref{theo9iuh67} for lower semicontinuous functions.

\begin{theorem}\label{theo8ucpa3}
    Let $X$ be a compactum, and let $F,G\colon X\to 2^X$ lower semicontinuous functions. Then, $G\circ F\colon X\to 2^X$ is a lower semicontinous function.
\end{theorem}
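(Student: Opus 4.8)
The plan is to establish lower semicontinuity of $G\circ F$ by means of the preimage criterion in Proposition~\ref{prophfg6tg}: a function $H\colon X\to 2^X$ is lower semicontinuous if and only if $H^{-1}(\langle X,V\rangle)$ is open for every open set $V\subseteq X$. Accordingly, I would fix an arbitrary open set $V\subseteq X$ and examine the preimage $(G\circ F)^{-1}(\langle X,V\rangle)=\{x\in X : (G\circ F)(x)\cap V\neq\emptyset\}$, reducing its openness to the openness statements for $F$ and $G$ taken separately.

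The key step is the set-theoretic identity
\[
(G\circ F)^{-1}(\langle X,V\rangle)=F^{-1}(\langle X,W\rangle),\qquad\text{where } W=G^{-1}(\langle X,V\rangle).
\]
To verify it I would unwind the definitions: since $(G\circ F)(x)=\bigcup_{t\in F(x)}G(t)$, the condition $(G\circ F)(x)\cap V\neq\emptyset$ holds exactly when some $t\in F(x)$ satisfies $G(t)\cap V\neq\emptyset$, that is, when $F(x)$ meets the set $W=\{t\in X : G(t)\cap V\neq\emptyset\}=G^{-1}(\langle X,V\rangle)$. Granting the identity, the conclusion is immediate: $W$ is open because $G$ is lower semicontinuous (Proposition~\ref{prophfg6tg}), and then $F^{-1}(\langle X,W\rangle)$ is open because $F$ is lower semicontinuous and $W$ is open (same proposition). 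Hence $(G\circ F)^{-1}(\langle X,V\rangle)$ is open for every open $V$. As an alternative I could argue straight from the neighbourhood formulation of lower semicontinuity: choose $t_0\in F(x)$ with $G(t_0)\cap V\neq\emptyset$, use lower semicontinuity of $G$ to obtain a neighbourhood $W$ of $t_0$ on which $G$ still meets $V$, and then lower semicontinuity of $F$ to obtain a neighbourhood $U$ of $x$ on which $F$ still meets $W$; for $y\in U$ any point of $F(y)\cap W$ then forces $(G\circ F)(y)\cap V\neq\emptyset$.

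The step I expect to require the most care is not the semicontinuity argument above but the verification that $G\circ F$ is genuinely a map into $2^X$, i.e. that each value $(G\circ F)(x)=\bigcup_{t\in F(x)}G(t)$ is closed; this closedness is needed even to legitimately apply Proposition~\ref{prophfg6tg} to $G\circ F$. In contrast to the upper semicontinuous setting, where images of closed sets under an upper semicontinuous function are again closed, lower semicontinuity alone does not guarantee that a union of the fibres $G(t)$ over the compact set $F(x)$ is closed. I would therefore treat this closedness as the crucial point, settling it (or fixing the appropriate convention, such as passing to closures in the definition of the composition) before invoking the preimage criterion to conclude.
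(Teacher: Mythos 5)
Your main argument is exactly the paper's: you prove the identity $(G\circ F)^{-1}(\langle X,V\rangle)=F^{-1}(\langle X,G^{-1}(\langle X,V\rangle)\rangle)$ and apply Proposition~\ref{prophfg6tg} twice, which is precisely how the published proof proceeds. The extra issue you flag --- whether $(G\circ F)(x)=\bigcup_{t\in F(x)}G(t)$ is actually closed --- is a genuine one that the paper's proof silently ignores, and it is not merely hypothetical: taking $F(x)=[0,1]$ for all $x$ and $G(0)=\{0\}$, $G(t)=\{0,\tfrac{1}{2}+\tfrac{t}{2}\}$ for $t>0$ gives a lower semicontinuous $G$ with $(G\circ F)(x)=\{0\}\cup(\tfrac{1}{2},1]$, which is not closed. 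So the semicontinuity argument itself is correct and matches the paper, but the claim that $G\circ F$ lands in $2^X$ requires either an added hypothesis or the convention of passing to closures, as you suggest; your proposal is, if anything, more careful than the source on this point.
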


\begin{proof}
Let $V$ be an open subset of $X$. Since $G$ is lower semicontinuous, 
$G^{-1}(\langle X, V\rangle)$ is open (Proposition~\ref{prophfg6tg}).
Also, by Proposition~\ref{prophfg6tg}, $F^{-1}(\langle X, G^{-1}(\langle X, V\rangle)\rangle)$ 
is open and, as a consequence of this, we only need to show that 
\begin{equation}\label{eqlscr}
      (G\circ F)^{-1}(\langle X, V\rangle)=F^{-1}(\langle X, G^{-1}(\langle X, V\rangle)\rangle).  
\end{equation}
Let $x\in (G\circ F)^{-1}(\langle X, V\rangle)$. Then $G(F(x))\cap V \neq\emptyset$. 
Hence, there exists $y\in F(x)$ such that $G(y)\cap V\neq\emptyset$. Thus, 
$y\in G^{-1}(\langle X, V\rangle)\rangle)$. Since 
$y\in F(x)$, we have that $F(x)\cap G^{-1}(\langle X, V\rangle)\rangle)\neq\emptyset$. 
Therefore, $x\in F^{-1}(\langle X, G^{-1}(\langle X, V\rangle)\rangle)$.
Next, assume that $x\in F^{-1}(\langle X, G^{-1}(\langle X, V\rangle)\rangle)$. Hence, 
$F(x)\cap G^{-1}(\langle X, V\rangle)\rangle)\neq\emptyset$. 
Let $y\in F(x)\cap G^{-1}(\langle X, V\rangle)\rangle)$. 
Then $G(y)\cap V\neq\emptyset$. Since $G(y)\subseteq G(F(x))$, we obtain that 
$G(F(x))\cap V\neq\emptyset$. Thus, 
$x\in (G\circ F)^{-1}(\langle X, V\rangle)$. 
Therefore, we have the equality (\ref{eqlscr}).
\end{proof}

Inductively, using Theorem~\ref{theo8ucpa3}, we have the following theorem.

\begin{theorem}\label{theo9yr34rs}
    Let $X$ be a compactum and let $F\colon X\to 2^X$ be a lower semicontinuous function. Then, $F^{n}\colon X\to 2^X$ is lower semicontinuous for each $n\in\mathbb N$.
\end{theorem}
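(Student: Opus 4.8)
The plan is to argue by induction on $n$, with essentially all of the substantive content already packaged in the binary composition result Theorem~\ref{theo8ucpa3}. The base case $n=1$ is immediate, since $F^{1}=F$ is lower semicontinuous by hypothesis.

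For the inductive step, I would suppose that $F^{n}\colon X\to 2^{X}$ is lower semicontinuous and then appeal to the recursion (\ref{eq00}), namely $F^{n+1}=F\circ F^{n}$. Here I would view $F^{n+1}$ as the composition whose inner map is $F^{n}$ and whose outer map is $F$: the inner map is lower semicontinuous by the inductive hypothesis and the outer map by assumption, so Theorem~\ref{theo8ucpa3}, applied with its ``$F$'' taken to be $F^{n}$ and its ``$G$'' taken to be $F$, yields at once that $F^{n+1}=F\circ F^{n}$ is lower semicontinuous. Since $n\in\mathbb{N}$ was arbitrary, the conclusion follows for every $n$. The one place to stay alert is the order of composition: (\ref{eq00}) composes on the \emph{left} by $F$, so $F^{n}$ must play the role of the inner function in Theorem~\ref{theo8ucpa3}, not the outer one, or else the hypotheses are matched incorrectly.

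I do not expect a genuine obstacle beyond invoking Theorem~\ref{theo8ucpa3}; the only piece of bookkeeping I would treat carefully is the well-definedness of the iterates as maps \emph{into} $2^{X}$, that is, that $F^{n}(x)=F(F^{n-1}(x))$ is again a nonempty closed set, so that the composition in (\ref{eq00}) really lands in $2^{X}$ and Theorem~\ref{theo8ucpa3} applies verbatim at each stage. This is precisely the role played, in the upper semicontinuous setting, by the earlier observation that $F(A)\in 2^{X}$ whenever $A$ is closed; in the lower semicontinuous setting the corresponding fact is not automatic, so I would either verify it separately or fold the statement ``$F^{n}$ is a well-defined $2^{X}$-valued map'' into the induction hypothesis before asserting its lower semicontinuity. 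Once that is secured, the induction closes in a single line.
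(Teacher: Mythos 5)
Your proof is correct and is exactly the paper's argument: the paper disposes of this theorem in one line, ``Inductively, using Theorem~\ref{theo8ucpa3},'' i.e.\ the same induction you describe, with $F^{n}$ as the inner function and $F$ as the outer one in the composition $F^{n+1}=F\circ F^{n}$. Your side remark about checking that $F^{n}(x)$ is actually a closed set (so that $F^{n}$ genuinely lands in $2^{X}$ and Theorem~\ref{theo8ucpa3} applies) is a legitimate point of care which the paper passes over in silence, since the argument that $F(A)\in 2^{X}$ for closed $A$ is given only in the upper semicontinuous setting.
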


 Next result follows from Theorems \ref{theo9iuh67} and \ref{theo9yr34rs}.
\begin{corollary}
    Let $X$ be a compactum and let $F\colon X\to 2^X$ be a function. If $F$ is continuous, then $F^{n}\colon X\to 2^X$ is continuous for each $n\in\mathbb N$.
\end{corollary}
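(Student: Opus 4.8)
The plan is to reduce the statement directly to the two preceding theorems by unwinding the definition of continuity for multivalued functions. Recall that, as defined in the excerpt, a function $F\colon X\to 2^X$ is \emph{continuous} precisely when it is simultaneously lower semicontinuous and upper semicontinuous. So the strategy is to verify each of these two semicontinuity properties for the iterate $F^n$ separately, and then recombine them.

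First I would observe that, since $F$ is continuous, it is in particular upper semicontinuous; hence Theorem~\ref{theo9iuh67} applies and yields that $F^n\colon X\to 2^X$ is upper semicontinuous for every $n\in\mathbb N$. Next, I would note that continuity of $F$ also entails that $F$ is lower semicontinuous, so that Theorem~\ref{theo9yr34rs} gives that $F^n\colon X\to 2^X$ is lower semicontinuous for every $n\in\mathbb N$. The final step is simply to combine these two conclusions for the \emph{same} iterate $F^n$: being both upper and lower semicontinuous, $F^n$ is continuous by definition, and this holds for each $n\in\mathbb N$.

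There is essentially no substantive obstacle in this argument, since all of the analytic work has already been carried out in establishing Theorems~\ref{theo9iuh67} and \ref{theo9yr34rs}. The only point requiring a moment of care is purely definitional: one must make sure that both theorems are invoked on the same fixed iterate $F^n$, so that the two semicontinuity conclusions can legitimately be merged into the single statement that $F^n$ is continuous.
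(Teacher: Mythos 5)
Your proposal is correct and follows exactly the paper's route: the authors also derive the corollary by applying Theorem~\ref{theo9iuh67} to get upper semicontinuity of $F^n$ and Theorem~\ref{theo9yr34rs} to get lower semicontinuity of $F^n$, then combining the two by the definition of continuity for multivalued functions.
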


\begin{theorem}\label{theo87g5}
Let $X$ be a compactum and let 
$F\colon X\to 2^X$ be an upper 
semicontinuous function. If $z\in X$, then
$\mathcal{O}_F(z)$ is a closed subset of $X^{\mathbb N}$.
\end{theorem}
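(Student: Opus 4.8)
The plan is to exhibit $\mathcal{O}_F(z)$ as an intersection of closed subsets of the product space $X^{\mathbb{N}}$. Recall that $X^{\mathbb{N}}$ is compact and metrizable, being a countable product of copies of the compactum $X$, so it suffices to show that the defining conditions ``$x_1=z$'' and ``$x_{n+1}\in F(x_n)$ for each $n$'' each cut out a closed set, and then invoke that an arbitrary intersection of closed sets is closed. The single crucial ingredient is Proposition~\ref{prop7uh}, specifically the equivalence of upper semicontinuity with the closedness of $\mathrm{Gr}(F)$ (item \textit{4}); this is what converts the membership condition $x_{n+1}\in F(x_n)$ into a closed constraint on coordinate pairs.

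Concretely, for each $n\in\mathbb{N}$ let $\pi_n\colon X^{\mathbb{N}}\to X$ denote the $n$-th coordinate projection, which is continuous by the definition of the product topology. First I would observe that $\{(x_m)_{m}\in X^{\mathbb{N}} : x_1=z\}=\pi_1^{-1}(\{z\})$ is closed, since $\{z\}$ is closed in $X$ and $\pi_1$ is continuous. Next, for each $n\in\mathbb{N}$ consider the continuous map $\Phi_n=(\pi_n,\pi_{n+1})\colon X^{\mathbb{N}}\to X\times X$. The condition $x_{n+1}\in F(x_n)$ is exactly the statement that $(x_n,x_{n+1})\in\mathrm{Gr}(F)$, so the set of sequences satisfying it is precisely $\Phi_n^{-1}(\mathrm{Gr}(F))$. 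Because $\mathrm{Gr}(F)$ is closed in $X\times X$ and $\Phi_n$ is continuous, each $\Phi_n^{-1}(\mathrm{Gr}(F))$ is closed. Putting these together gives
\begin{equation*}
\mathcal{O}_F(z)=\pi_1^{-1}(\{z\})\cap\bigcap_{n=1}^{\infty}\Phi_n^{-1}(\mathrm{Gr}(F)),
\end{equation*}
an intersection of closed sets, hence closed.

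There is no serious obstacle here; the only point requiring care is the correct use of upper semicontinuity, and the whole argument hinges on recognizing that its relevant manifestation is the closed-graph characterization rather than the $\varepsilon$--$\delta$ or $\limsup$ forms. As an alternative purely sequential route, one could instead take a convergent sequence of orbits $x^{(m)}=(x_n^{(m)})_{n}\to(x_n)_{n}$ in $X^{\mathbb{N}}$, note that coordinatewise $x_n^{(m)}\to x_n$ as $m\to\infty$, and then use that $(x_n^{(m)},x_{n+1}^{(m)})\in\mathrm{Gr}(F)$ together with closedness of $\mathrm{Gr}(F)$ to pass to the limit $(x_n,x_{n+1})\in\mathrm{Gr}(F)$; the condition $x_1=z$ survives trivially. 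I expect the intersection argument to be the cleaner presentation, so I would adopt it as the main proof.
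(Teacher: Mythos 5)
Your proof is correct, and it takes a genuinely different route from the paper's. The paper argues directly on the complement: given $(x_i)_{i=1}^{\infty}\notin\mathcal{O}_F(z)$ with $x_1=z$, it picks $k$ with $x_{k+1}\notin F(x_k)$ and uses regularity of $X$ together with the neighbourhood definition of upper semicontinuity to produce open sets $O\ni x_k$ and $V\ni x_{k+1}$ with $F(y)\subseteq X\setminus V$ for all $y\in O$, so that $\pi_k^{-1}(O)\cap\pi_{k+1}^{-1}(V)$ is an open neighbourhood missing $\mathcal{O}_F(z)$. You instead invoke the closed-graph characterization (item \textit{4} of Proposition~\ref{prop7uh}, which the paper does record) and write
\begin{equation*}
\mathcal{O}_F(z)=\pi_1^{-1}(\{z\})\cap\bigcap_{n=1}^{\infty}(\pi_n,\pi_{n+1})^{-1}(\mathrm{Gr}(F)),
\end{equation*}
an intersection of preimages of closed sets under continuous maps. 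Your version is shorter and structurally cleaner, and it makes the closedness of each truncated set $\mathcal{O}_F(x)|_{[n]}$ in Remark~\ref{rem001} immediate (it is just the corresponding finite intersection), whereas the paper has to remark that ``the same argument'' applies. What the paper's version buys is self-containedness: it works straight from the definition of upper semicontinuity without routing through the Kuratowski equivalence, and the separation argument it builds is the kind of local control that reappears elsewhere in the paper. Either proof is acceptable; the one point to keep explicit in yours is that the graph is closed precisely because $F$ is upper semicontinuous \emph{with closed values}, which is guaranteed by the hypothesis $F\colon X\to 2^X$.
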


\begin{proof}
Let $(x_i)_{i=1}^{\infty}\in X^{\N}\setminus \mathcal{O}_F(z)$. If $x_1\neq z$, then $W=\pi_1^{-1}(X\setminus\{z\})$ is an open subset of $X^{\N}\setminus \mathcal{O}_F(z)$ and $(x_i)_{i=1}^{\infty}\in W$. Hence, suppose that $x_1=z$. 
Then there exists $k\in \N$ such that $x_{k+1}\notin F(x_k)$. Since $X$ is a regular space and $F$ is upper semicontinuous, there exist open subsets $O, U$ and $V$ of $X$ where $x_k\in O$, $F(x_k)\subseteq U$, $x_{k+1}\in V$, $U\cap V=\emptyset$, and $F(y)\subseteq U$ for all $y\in O$. 
Let $S=\pi_k^{-1}(O)\cap\pi_{k+1}^{-1}(V)$. Note that $S$ is an open subset of $X^{\N}\setminus \mathcal{O}_F(z)$ and $(x_i)_{i=1}^{\infty}\in S$. 
Therefore,  $\mathcal{O}_F(z)$ is a closed subset of $X^{\mathbb N}$. 
\end{proof}

Let $F,G\colon [0,1]\to 2^{[0,1]}$ be given, for each $t\in [0,1]$, by:
$$F(t)=\begin{cases}
    \{0\}, &\text{ if }t=0;\\
    [0,1], &\text{ if }t\in (0,1],
\end{cases}\ \text{ and }\ G(t)=\begin{cases}
    \{0\}, &\text{ if }t=0;\\
    [t,1], &\text{ if }t\in (0,1].
\end{cases}$$
Observe that $(1,0,1,0,0,\ldots)\in \Cl(\mathcal{O}_F(1))\setminus \mathcal{O}_F(1)$, because $$\lim_{n\to\infty}(1,\tfrac{1}{n},1,0,0,\ldots)=(1,0,1,0,0,\ldots),$$ where $(1,\frac{1}{n},1,0,0,\ldots)\in \mathcal{O}_F(1)$ for all $n\in\N$. Thus, $\mathcal{O}_F(1)$ is not closed. Also, we have that 
$$\mathcal{O}_G(t)=\begin{cases}
    \{(0,0,\ldots)\}, &\text{ if }t=0;\\
    \{(t_1,t_2,\ldots) : t_1=t \text{ and }t_i\leq t_{i+1} \text{ for each } i\}, &\text{ if }t\in (0,1].
\end{cases}$$
Thus, $\mathcal{O}_G(t)$ is closed for each $t\in [0,1]$. The functions $F$ and $G$ are not upper semicontinuous.

\begin{theorem}\label{theogf76hy3m}
    Let $X$ be a compactum and let $F\colon X\to 2^X$ be an upper semicontinuous function. Then, $$\bigcup\{\mathcal{O}_F(x) : x\in X\}$$ is compact.
\end{theorem}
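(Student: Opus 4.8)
The plan is to identify the union with a single subset of the product space and then show that subset is closed, exactly as in Theorem~\ref{theo87g5}. First I would observe that
$$\bigcup\{\mathcal{O}_F(x) : x\in X\}=\left\{(x_n)_{n\in\N}\in X^{\N} : x_{n+1}\in F(x_n) \text{ for each } n\in\N\right\}.$$
Indeed, any sequence on the right-hand side automatically belongs to $\mathcal{O}_F(x_1)$, so it lies in the union; conversely, every element of $\mathcal{O}_F(x)$, for some $x\in X$, satisfies the orbit condition $x_{n+1}\in F(x_n)$ and hence belongs to the right-hand side. Denote this common set by $\mathcal{O}_F$.

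Since $X$ is compact, the product $X^{\N}$ is compact by Tychonoff's theorem. Therefore it suffices to prove that $\mathcal{O}_F$ is a closed subset of $X^{\N}$, because a closed subset of a compact space is compact. The closedness argument is an adaptation of the proof of Theorem~\ref{theo87g5}, the only difference being that we no longer impose the condition $x_1=z$; consequently the special case handled there by the neighbourhood $\pi_1^{-1}(X\setminus\{z\})$ simply disappears. Given $(x_i)_{i=1}^{\infty}\in X^{\N}\setminus\mathcal{O}_F$, there exists $k\in\N$ with $x_{k+1}\notin F(x_k)$. Using that $X$ is regular and that $F$ is upper semicontinuous, I would produce open subsets $O,U,V$ of $X$ with $x_k\in O$, $F(x_k)\subseteq U$, $x_{k+1}\in V$, $U\cap V=\emptyset$, and $F(y)\subseteq U$ for all $y\in O$. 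Then $S=\pi_k^{-1}(O)\cap\pi_{k+1}^{-1}(V)$ is an open neighbourhood of $(x_i)_{i=1}^{\infty}$ that is disjoint from $\mathcal{O}_F$: any sequence in $S$ has its $k$-th coordinate in $O$, forcing its $(k+1)$-th coordinate into $U$, while $S$ places that coordinate in $V$, which is disjoint from $U$. Hence $S\subseteq X^{\N}\setminus\mathcal{O}_F$.

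There is essentially no new obstacle here, since the construction in Theorem~\ref{theo87g5} never used the constraint $x_1=z$ beyond isolating the first coordinate. The only genuine step is the initial reformulation of the union as the full set of admissible sequences; once that identification is made, compactness follows immediately from Tychonoff's theorem together with the closedness argument above. Thus the main point to get right is verifying the set equality, and the rest is a routine repetition of the earlier separation argument.
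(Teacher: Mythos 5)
Your proof is correct, but it takes a different route from the paper's. You first rewrite the union as the single set $\{(x_n)_{n\in\N}\in X^{\N} : x_{n+1}\in F(x_n)\ \text{for each}\ n\}$ and then show that this set is closed by the same open-neighbourhood separation argument used for Theorem~\ref{theo87g5}, concluding via compactness of $X^{\N}$. The paper instead argues sequentially: it takes a convergent sequence $(z^n)$ in the union, splits into the case where infinitely many $z^n$ lie in one $\mathcal{O}_F(x_0)$ (handled by Theorem~\ref{theo87g5}) and the case where the initial points are all distinct, and in the latter case uses the characterization $\limsup F(z_m^n)\subseteq F(z_m)$ from Proposition~\ref{prop7uh} to verify the orbit condition in the limit. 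Your identification of the union as a ``global orbit set'' defined purely by the local conditions $x_{n+1}\in F(x_n)$ eliminates the case distinction and lets the earlier separation argument apply verbatim, which is arguably cleaner; the paper's version has the mild advantage of exercising the $\limsup$ criterion for upper semicontinuity, which it also uses elsewhere. Both arguments are complete and correct.
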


\begin{proof}
      Let $(z^n)_{n=1}^{\infty}$ be a sequence in $\bigcup\{\mathcal{O}_F(x) : x\in X\}$ 
    such that $\lim_{n\to\infty}z^n=z$, for some $z\in X^{\mathbb N}$. We prove that 
    $z\in \bigcup\{\mathcal{O}_F(x) : x\in X\}$. Note that if there exists $x_0\in X$ 
    such that $\{n\in \mathbb N : z^n\in \mathcal{O}_F(x_0)\}$ is infinite, then, by Theorem~\ref{theo87g5},  
    $z\in \mathcal{O}_F(x_0)$. Hence, we assume 
    that $z^n\in \mathcal{O}_F(x_1^n)$, for each $n\in\mathbb N$, where 
    $x_1^i\neq x_1^j$ whenever $i\neq j$. Let $z^n=(z_1^n,z_2^n,\ldots)$, 
    for every $n\in\mathbb N$ and let $z=(z_1, z_2,\ldots)$. Since 
    $z_1^n=x_1^n$ and $\lim_{n\to\infty}z^n=z$, we have that $\lim_{n\to \infty}x_1^n=z_1$. 
    We show that $z\in \mathcal{O}_F(z_1)$. Let $m\in\mathbb N$. We 
    know that $\lim_{n\to\infty}z_m^n=z_m$. By Proposition~\ref{prop7uh}, 
    $\limsup F(z_m^n)\subseteq F(z_m)$. Since $z_{m+1}^n\in F(z_m^n)$, 
    for all $n\in\mathbb N$, and $\lim_{n\to\infty}z_{m+1}^n=z_{m+1}$, 
    we have that $z_{m+1}\in \limsup F(z_m^n)$. Thus, $z_{m+1}\in F(z_m)$. 
    Therefore, $z\in \mathcal{O}_F(z_1)\subseteq \bigcup\{\mathcal{O}_F(x) : x\in X\}$.
\end{proof}

Given an upper semicontinuous function $F\colon X\to 2^X$, $x\in X$ and $n\in\mathbb N$, we use the following notation: 
$$\mathcal{O}_F(x)|_{[n]}=\{(y_i)_{i=1}^{\infty}\in X^{\mathbb N} : y_1=x \text{ and }y_{i+1}\in F(y_i) \text{ for each }i\leq n-1\}.$$

\begin{remark}\label{rem001}
Observe that:
\begin{itemize}
    \item $\mathcal{O}_F(x)|_{[n+1]}\subseteq \mathcal{O}_F(x)|_{[n]}$, for each $n\in\mathbb N$; and
    \item $\mathcal{O}_F(x)=\bigcap_{n\in\mathbb N}\mathcal{O}_F(x)|_{[n]}$.
\end{itemize}     
Also, note that, by applying the same argument as in the proof of Theorem~\ref{theo87g5}, we obtain that $\mathcal{O}_F(x)|_{[n]}$ is closed, for each $x\in X$ and all $n\in\mathbb N$. 
\end{remark}

\begin{lemma}\label{lemma0}
    Let $X$ be a continuum and let $F\colon X\to 2^X$ be an upper semicontinuous function. If $F(x)$ is connected, for every $x\in X$, then $\mathcal{O}_F(x)|_{[n]}$ is a continuum, for each $x\in X$ and all $n\in\mathbb N$.
\end{lemma}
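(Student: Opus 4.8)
The plan is to reduce the statement to a finite-dimensional assertion. Remark~\ref{rem001} already guarantees that $\mathcal{O}_F(x)|_{[n]}$ is closed in the compactum $X^{\mathbb N}$, hence compact, so only connectedness remains. To capture the constrained coordinates I would introduce the set of length-$n$ orbit segments
\begin{equation*}
C_n=\{(y_1,\dots,y_n)\in X^n : y_1=x \text{ and } y_{i+1}\in F(y_i)\text{ for each }i\leq n-1\},
\end{equation*}
and observe that, under the natural homeomorphism $X^{\mathbb N}\cong X^n\times X^{\mathbb N}$ separating the first $n$ coordinates from the rest, one has $\mathcal{O}_F(x)|_{[n]}\cong C_n\times X^{\mathbb N}$, since the coordinates $y_{n+1},y_{n+2},\dots$ are completely unconstrained. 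As $X$ is a continuum, $X^{\mathbb N}$ is a continuum (a countable product of continua is compact by Tychonoff, connected because a product of connected spaces is connected, and metrizable), and a product of two continua is a continuum; thus it suffices to prove that $C_n$ is a continuum.

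The technical core, which I regard as the main obstacle, is the following claim: if $Y$ is a continuum and $H\colon Y\to 2^X$ is upper semicontinuous with $H(y)$ connected for every $y\in Y$, then $\mathrm{Gr}(H)$ is a continuum. To prove it, note first that $\mathrm{Gr}(H)$ is closed (by the analogue of Proposition~\ref{prop7uh}, part \textit{4}), hence compact. For connectedness, suppose $\mathrm{Gr}(H)=A\cup B$ with $A,B$ nonempty, closed and disjoint, and let $p\colon \mathrm{Gr}(H)\to Y$ be the projection, which is continuous and surjective because each $H(y)$ is nonempty. Each fibre $p^{-1}(y)=\{y\}\times H(y)$ is connected, so it lies entirely in $A$ or entirely in $B$; consequently $p(A)$ and $p(B)$ are disjoint, and they are nonempty closed sets with $p(A)\cup p(B)=Y$, contradicting the connectedness of $Y$. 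Here upper semicontinuity is used precisely to make the graph closed, while connectedness of the values is what forces each fibre into a single piece of the partition.

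With this claim in hand, I would show that $C_n$ is a continuum by induction on $n$. For $n=1$ we have $C_1=\{x\}$, a one-point continuum. Assuming $C_n$ is a continuum, define $\widetilde{F}\colon C_n\to 2^X$ by $\widetilde{F}(y_1,\dots,y_n)=F(y_n)$; this has connected values, and its graph is closed in $C_n\times X$ because it is the preimage of the closed set $\mathrm{Gr}(F)$ under the continuous map $((y_1,\dots,y_n),w)\mapsto(y_n,w)$, so $\widetilde{F}$ is upper semicontinuous. The claim then yields that $\mathrm{Gr}(\widetilde{F})$ is a continuum, and under the identification $X^{n+1}\cong X^n\times X$ this graph is exactly $C_{n+1}$. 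This closes the induction, and together with the reduction of the first paragraph it proves that $\mathcal{O}_F(x)|_{[n]}$ is a continuum for every $x\in X$ and every $n\in\mathbb N$.
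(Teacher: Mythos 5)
Your proof is correct, but it takes a genuinely different route from the paper's. The paper also argues by induction, but it works directly in $X^{\mathbb N}$ and peels off the \emph{second} coordinate: it writes $\mathcal{O}_F(x)|_{[k+1]}=\bigcup_{z\in F(x)}Y_z$, where $Y_z$ (the sequences with $y_2=z$) is homeomorphic to $\mathcal{O}_F(z)|_{[k]}$ and hence a continuum by the inductive hypothesis, and then shows that a separation $\mathcal{M}\cup\mathcal{N}$ would split the connected set $F(x)$ into the disjoint closed sets $L_{\mathcal{M}}=\{z : Y_z\subseteq\mathcal{M}\}$ and $L_{\mathcal{N}}$, with closedness of $L_{\mathcal{M}}$ verified by a sequence argument. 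You instead peel off the \emph{last} coordinate: you truncate to the finite set $C_n$, identify $C_{n+1}$ with the graph of the upper semicontinuous, connected-valued map $\widetilde F(y_1,\dots,y_n)=F(y_n)$ over the continuum $C_n$, and invoke a clean general lemma (the graph of such a map over a continuum is a continuum), proved by noting that each fibre lies in one piece of a putative separation, so the projections of the two pieces would separate the base. The two connectedness inputs thus play swapped roles: in the paper the pieces are continua by induction and the index set $F(x)$ is connected by hypothesis, whereas for you the base is a continuum by induction and the fibres are connected by hypothesis. Your graph lemma is a reusable statement and its closedness step (compact images of the two closed pieces) is somewhat cleaner than the paper's sequence chase; the paper's version avoids introducing the finite truncations and the product decomposition $\mathcal{O}_F(x)|_{[n]}\cong C_n\times X^{\mathbb N}$. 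Both arguments are complete.
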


\begin{proof}
    By Remark~\ref{rem001}, $\mathcal{O}_F(x)|_{[n]}$ is closed, for each $x\in X$ and all $n\in\mathbb N$. 
    We prove the lemma by induction. Given $x\in X$, $\mathcal{O}_F(x)|_{[1]}=\{(y_i)_{i=1}^{\infty}\in X^{\mathbb N} : y_1=x \}$. 
    Observe that $\mathcal{O}_F(x)|_{[1]}$ is homeomorphic to $X^{\mathbb N}$. Hence, $\mathcal{O}_F(x)|_{[1]}$ is a continuum, for every $x\in X$. 
    Assume that $\mathcal{O}_F(x)|_{[k]}$ is a continuum, for each $x\in X$ and some $k\in\mathbb N$. 
    We see that $\mathcal{O}_F(x)|_{[k+1]}$ is a continuum for all $x\in X$. Let $x\in X$. For each $z\in F(x)$, let $$Y_z=\{(y_i)_{i=1}^{\infty}\in X^{\mathbb N} : y_1=x, y_2=z, \text{ and }y_{i+1}\in F(y_i)\text{ for each }i\in\{2,\ldots,k\}\}.$$ Note that:
    \begin{enumerate}
        \item $\mathcal{O}_F(x)|_{[k+1]}=\bigcup_{y\in F(x)}Y_z$; and
        \item $\phi\colon Y_z\to \mathcal{O}_F(y)|_{[k]}$ given by $\phi((y_i)_{i=1}^{\infty})=(y_i)_{i=2}^{\infty}$ is a homeomorphism; i.e., $Y_z$ is homeomorphic to $\mathcal{O}_F(y)|_{[k]}$, for all $y\in F(x).$
    \end{enumerate}
    Thus, $Y_z$ is a continuum for each $z\in F(x)$.

\bigskip

    Suppose that $\mathcal{O}_F(x)|_{[k+1]}$ is not connected; i.e., there exist two closed sets nonempty and disjoint $\mathcal{M}$ and $\mathcal{N}$ such that $\mathcal{O}_F(x)|_{[k+1]}=\mathcal{M}\cup\mathcal{N}$. 
    Since $Y_z$ is connected, we have $Y_z\subseteq \mathcal{M}$ or $Y_z\subseteq \mathcal{N}$, for each $z\in F(x)$. 
    Let $L_{\mathcal{M}}=\{z\in F(x) : Y_z\subseteq \mathcal{M}\}$ and $L_{\mathcal{N}}=\{z\in F(x) : Y_z\subseteq \mathcal{N}\}$. 
    We prove that $L_{\mathcal{M}}$ is closed.
    Let $(z_n)_{n=1}^{\infty}$ be a sequence in $L_{\mathcal{M}}$ such that $\lim_{n\to\infty}z_n=z$, for some $z\in F(x)$. 
    For each $n\in\mathbb N$, let $w_n\in Y_{z_n}$. 
    Since $(w_n)_{n=1}^{\infty}$ is a sequence in $\mathcal{O}_F(x)|_{[k+1]}$ and $\mathcal{O}_F(x)|_{[k+1]}$ is compact, without loss of generality, we may assume that $\lim_{n\to\infty}w_n=w$, for some $w\in \mathcal{O}_F(x)|_{[k+1]}$. 
    We have $w\in Y_z$. 
    Since $Y_{z_n}\subseteq \mathcal{M}$, $(w_n)_{n=1}^{\infty}$ is a sequence in $\mathcal{M}$. 
    Since $\mathcal{M}$ is closed, we have that
    $w\in \mathcal{M}$ and $Y_z\subseteq \mathcal{M}$. 
    Therefore, $z\in L_{\mathcal{M}}$ and $L_{\mathcal{M}}$ is closed. 
    Similarly, we obtain that $L_{\mathcal{N}} $ is a closed set. 
    Since $\mathcal{M}\cap\mathcal{N}=\emptyset$, $L_{\mathcal{M}}\cap L_{\mathcal{N}}=\emptyset$. 
    Also, we have that $F(x)=L_{\mathcal{M}}\cup L_{\mathcal{N}}$.
    This contradicts the fact that $F(x)$ is connected. 
    We conclude that $\mathcal{O}_F(x)|_{[n]}$ is a continuum for each $x\in X$ and all $n\in\mathbb N$.  
\end{proof}

Since the nested intersection of continua is a continuum \cite[Theorem 1.7.2]{MT}, the next theorem follows from Remark~\ref{rem001} and Lemma~\ref{lemma0}.

\begin{theorem}\label{theom}
    Let $X$ be a continuum and let $F\colon X\to 2^X$ be an upper semicontinuous function. If $F(x)$ is connected for every $x\in X$, then $\mathcal{O}_F(x)$ is a continuum for each $x\in X$.
\end{theorem}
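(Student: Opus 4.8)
The plan is to realize $\mathcal{O}_F(x)$ as a nested intersection of the finite-stage orbit sets $\mathcal{O}_F(x)|_{[n]}$ and then apply the fact that a decreasing intersection of continua is a continuum. Essentially all of the substantive work has already been carried out in Lemma~\ref{lemma0}; the present statement is then obtained by a straightforward limiting argument.

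First I would invoke Lemma~\ref{lemma0}: since $X$ is a continuum and $F(x)$ is connected for every $x\in X$, each set $\mathcal{O}_F(x)|_{[n]}$ is a continuum, for every $x\in X$ and all $n\in\mathbb N$. This is the one place where the connectedness hypothesis on the values $F(x)$ is used, and it is precisely the content of the preceding lemma.

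Next I would apply Remark~\ref{rem001}, which records two facts: the family $\{\mathcal{O}_F(x)|_{[n]}\}_{n\in\mathbb N}$ is decreasing, that is $\mathcal{O}_F(x)|_{[n+1]}\subseteq \mathcal{O}_F(x)|_{[n]}$, and its intersection is exactly the orbit set, $\mathcal{O}_F(x)=\bigcap_{n\in\mathbb N}\mathcal{O}_F(x)|_{[n]}$. Thus $\mathcal{O}_F(x)$ is exhibited as a nested intersection of continua inside the compactum $X^{\mathbb N}$.

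Finally, since the nested intersection of a decreasing sequence of continua is again a continuum \cite[Theorem 1.7.2]{MT}, I would conclude that $\mathcal{O}_F(x)$ is a continuum, as claimed. The only hypothesis of that cited theorem worth a word of justification is that each $\mathcal{O}_F(x)|_{[n]}$ is nonempty: this holds because each value $F(y)$ is a nonempty closed set, so one may successively choose $x_2\in F(x)$, $x_3\in F(x_2)$, and so on, producing at least one orbit. The main point to keep in mind is that the genuine difficulty—connectedness at each finite stage—was already overcome in Lemma~\ref{lemma0}, so here there is no serious obstacle beyond assembling the cited ingredients.
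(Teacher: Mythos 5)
Your proposal is correct and follows exactly the paper's own route: the paper also deduces the theorem from Lemma~\ref{lemma0} and Remark~\ref{rem001} together with the fact that a nested intersection of continua is a continuum \cite[Theorem 1.7.2]{MT}. Your extra remark on nonemptiness of each $\mathcal{O}_F(x)|_{[n]}$ is a harmless (and reasonable) addition.
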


In the following result, we denote by $d$ a metric on $X$ such that $d(x,x')\leq 1$ for each $x,x'\in X$, 
and $\rho$ the metric on $X^{\N}$ defined for each $(y_i)_{i=1}^{\infty},(z_i)_{i=1}^{\infty}\in X^{\N}$, by
\begin{equation}\label{eqmet}
    \rho((y_i)_{i=1}^{\infty},(z_i)_{i=1}^{\infty})=\sum_{i=1}^{\infty}\frac{d(y_i,z_i)}{2^i}.
\end{equation}

\begin{theorem}\label{CantorOrb}
    Let $X$ be a continuum and let $F\colon X\to 2^{X}$ be an upper semicontinuous function. If $F(x)$ is a finite and nondegenerate subset of $X$, for each $x\in X$, then $\mathcal{O}_F(z)$ is a Cantor set for every $z\in X$.
\end{theorem}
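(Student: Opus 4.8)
The plan is to verify the four defining properties of a Cantor set for the metric space $(\mathcal{O}_F(z),\rho)$: nonemptiness, compactness, total disconnectedness, and absence of isolated points (perfectness). The first two come essentially for free. Since $F(x)\neq\emptyset$ for every $x\in X$, one can recursively choose $x_{n+1}\in F(x_n)$ starting from $x_1=z$, so $\mathcal{O}_F(z)\neq\emptyset$. For compactness, $X^{\mathbb N}$ is compact by Tychonoff's theorem, and $\mathcal{O}_F(z)$ is a closed subset of it by Theorem~\ref{theo87g5}; hence $\mathcal{O}_F(z)$ is compact.

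Next, for total disconnectedness I would exploit that every projection is finite. By Lemma~\ref{lem0}, $\pi_k(\mathcal{O}_F(z))=F^{k-1}(z)$ for $k\geq 2$ (while $\pi_1(\mathcal{O}_F(z))=\{z\}$). Since $F(z)$ is finite and $F^{n+1}(z)=\bigcup_{y\in F^{n}(z)}F(y)$ is a finite union of finite sets, an easy induction shows $F^{k-1}(z)$ is finite for every $k$. Thus each $\pi_k(\mathcal{O}_F(z))$ is a finite, hence discrete, subspace of $X$. Given two distinct orbits $a=(a_i)_{i=1}^{\infty}$ and $b=(b_i)_{i=1}^{\infty}$ in $\mathcal{O}_F(z)$, choose $k$ with $a_k\neq b_k$; then $\{a_k\}$ is clopen in $\pi_k(\mathcal{O}_F(z))$, so $\pi_k^{-1}(\{a_k\})\cap \mathcal{O}_F(z)$ is a clopen subset of $\mathcal{O}_F(z)$ containing $a$ but not $b$. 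Consequently no connected subset can contain two distinct points, so $\mathcal{O}_F(z)$ is totally disconnected.

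The main work, and the only place where the nondegeneracy hypothesis is used, is showing that $\mathcal{O}_F(z)$ has no isolated points. Fix an orbit $a=(a_i)_{i=1}^{\infty}\in\mathcal{O}_F(z)$ and an integer $N\in\mathbb N$. Because $F(a_N)$ is nondegenerate, $F(a_N)\setminus\{a_{N+1}\}\neq\emptyset$, so I can pick $b_{N+1}\in F(a_N)\setminus\{a_{N+1}\}$ and then extend arbitrarily, using the nonemptiness of the value sets, to obtain an orbit $b=(a_1,\ldots,a_N,b_{N+1},b_{N+2},\ldots)\in\mathcal{O}_F(z)$. This $b$ agrees with $a$ on the first $N$ coordinates and differs at coordinate $N+1$, so $b\neq a$, while the definition of $\rho$ in (\ref{eqmet}) together with $d\leq 1$ gives $\rho(a,b)\leq\sum_{i>N}2^{-i}=2^{-N}$. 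Letting $N\to\infty$ produces a sequence of orbits distinct from $a$ and converging to $a$, so $a$ is not isolated.

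Combining these four facts shows that $\mathcal{O}_F(z)$ is a nonempty, compact, perfect, totally disconnected metric space, i.e.\ a Cantor set. I expect the perfectness step to be the crux: it is the only part relying on nondegeneracy and the explicit branching construction, whereas total disconnectedness rests on the finiteness of the value sets and compactness was already established in Theorem~\ref{theo87g5}.
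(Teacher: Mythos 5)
Your proposal is correct and follows essentially the same route as the paper: closedness (hence compactness) from Theorem~\ref{theo87g5}, perfectness by branching at coordinate $N$ using the nondegeneracy of $F(a_N)$ and the tail estimate for $\rho$, and total disconnectedness from the finiteness of the projections via Lemma~\ref{lem0}, concluding with the topological characterization of the Cantor set. The only difference is that you spell out the clopen separation and the nonemptiness explicitly, which the paper leaves implicit.
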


\begin{proof}
    Let $z\in X$. By Theorem~\ref{theo87g5}, $\mathcal{O}_F(z)$ is closed. We show that $\mathcal{O}_F(z)$ is perfect. Let $(x_i)_{i=1}^{\infty}\in \mathcal{O}_F(z)$, let $\varepsilon>0$, and let $N\in\N$ be such that $$\sum_{i=N+1}^{\infty}\frac{1}{2^i}<\varepsilon.$$ Since $F(x_N)$ is nondegenerate, there exists $y\in F(x_N)\setminus\{x_{N+1}\}$. Inductively, we construct a sequence $(z_i)_{i=1}^{\infty}\in \mathcal{O}_F(z)$ such that $z_i=x_i$ for each $i\in\{1,\ldots,N\}$ and $z_{N+1}=y$. Hence, $(z_i)_{i=1}^{\infty}\neq (x_i)_{i=1}^{\infty}$ and by (\ref{eqmet}), $$\rho((z_i)_{i=1}^{\infty},(x_i)_{i=1}^{\infty})= \sum_{i=N+1}^{\infty}\frac{d(z_i,x_i)}{2^i}\leq \sum_{i=N+1}^{\infty}\frac{1}{2^i}<\varepsilon.$$
    Thus, $\mathcal{O}_F(z)$ is perfect.

    Finally, since $F^k(z)$ is finite, we have that $\pi_k(\mathcal{O}_F(z))$ is finite  for all $k\in\N$, by Lemma~\ref{lem0}. Thus, $\mathcal{O}_F(z)$ is a totally disconnected subset of $X^{\N}$. Therefore, $\mathcal{O}_F(z)$ is a Cantor set \cite[Theorem~30.3]{Will}.
\end{proof}

Let $F\colon [0,1]\to 2^{[0,1]}$ be given by 
$F(t)=\{t,1-t\}$. Observe that $F(\frac{1}{2})=
\{\frac{1}{2}\}$; i.e., $\mathcal{O}_F(\frac{1}{2})=\{(\frac{1}{2}, \frac{1}{2}, \ldots )\}$ is not a Cantor set. However, if $t\in [0,1]\setminus\{{1\over 2}\}$, then there exists a bijection (a homeomorphism) between the all the sequences of $0$'s and $1$'s and $\mathcal{O}_F(t)$. Hence, $\mathcal{O}_F(t)$ is a Cantor set.

%Let $X$ be a compactum, let $z$ be a point of $X$ and let $F\colon X\to 2^X$ be given by $F(x)=\{z,x\}$. Note that $F$ is upper semicontinuous. Note that if $x\in X\setminus\{z\}$, then there exists a homeomorphism between the all the sequences of $0$'s and $1$'s and $\mathcal{O}_F(x)$. Hence, $\mathcal{O}_F(x)$ is a Cantor set. 

\begin{example}
    Let $f\colon [0,1]\tto [0,1]$ be the devil's staircase 
function \cite[Figure~3-19, p.~131]{HY} and let
$F\colon [0,1]\to 2^{[0,1]}$ be given by $F(t)=\{t,f(t)\}$.
Note that $F$ is continuous, and $F(t)=\{t\}$ if and only if $t\in\{0,\frac{1}{2},1\}$.
If $t\in [\frac{1}{3},\frac{2}{3}]$, then 
$$\mathcal{O}_F(t)=\left\{(t_1,\ldots, t_n,\tfrac{1}{2},\tfrac{1}{2},\ldots): t_1=\ldots=t_n=t, n\in\mathbb N\right\}\cup
\left\{(t,t,\ldots)\right\}.$$
Hence, even though for each 
$t\in [0,1]\setminus\left\{0,\frac{1}{2},1\right\}$,
$F(t)$ consists of two points, we have many orbits 
of points of $[0,1]$, with respect to $F$, that are 
not Cantor sets.
\end{example}

%{\red
%Let $f\colon [0,1]\tto [0,1]$ be the devil's staircase 
%function \cite[Figure 3-19, p. 131]{HY} and let
%$F\colon [0,1]\to 2^{[0,1]}$ be given by $F(t)=\
%{t,f(t)\}$.
%Note that $F$ is continuous, $F(0)=\{0\}$, 
%$F(\frac{1}{2})=\{\frac{1}{2}\}$, and 
%$F(1)=\{1\}$. Also, if 
%$t\in [\frac{1}{3},\frac{2}{3}]$, then 
%$\mathcal{O}_F(t)=\{(\underbrace{t,\ldots, t}_{n\ {\rm 
%times}},\frac{1}{2},\frac{1}{2},\ldots)\}_{n=1}^\infty\cup
%\{(t,t,\ldots)\}$. Hence, even though for each 
%$t\in [0,1]\setminus\left\{0,\frac{1}{2},1\right\}$,
%$F(t)$ consists of two points, we have many orbits 
%of points of $[0,1]$, with respect to $F$, that are 
%not Cantor sets. \underline{I believe that all the %orbits} \underline{are countable.}
%}

\begin{question}
    Let $F\colon X\to 2^X$ be an upper semicontinuous function. If $F(x)$ is a nondegenerate and totally disconnected subset of $X$, for each $x\in X$, then does it follow that $\mathcal{O}_F(z)$ is a Cantor set, for every $z\in X$?
\end{question}

\section{More about orbit sets}\label{MoreOrbits}

We know that if $X$ is a continuum and $F\colon X\to 2^X$ is an upper semicontinuous function such that $F(x)$ is connected, for each $x\in X$, then $\mathcal{O}_F(x)$ is a continuum for every $x\in X$ (see Theorem~\ref{theom}). Given a fixed continuum $X$, it is interesting to explore the types of continua that can be obtained using various upper semicontinuous functions. In this section, we provide examples of continua when $X$ is an arc.

\begin{question}\label{quest0}
Let $X$ be a compactum. Does there exist an upper semicontinuous function $F\colon X\to 2^{X}$ and
$p\in X$ such that $\mathcal{O}_F(p)$ is homeomorphic to $X$?
\end{question}

We do not currently have an answer to this question. The following example presents some positive answers to Question~\ref{quest0}.

\begin{example}
    For each compactum $X$, we define an upper semicontinuous function $F\colon X\to 2^X$ and a point $p\in X$ such that $\mathcal{O}_F(p)$ is homeomorphic to $X$.

    \begin{enumerate}
        \item Let $X=\{\frac{1}{n} : n\in\mathbb N\}\cup\{0\}$ and let $F\colon X\to 2^X$ be given by $F(x)=\{x\}$, for each $x\in X\setminus \{1\}$, and $F(1)=X\setminus\{1\}$. Note that $F$ is upper semicontinuous and $\mathcal{O}_F(1)$ is homeomorphic to $X\setminus\{1\}$. Thus, $\mathcal{O}_F(1)$ is homeomorphic to $X$.
        
        \item Let $X=\{0,1\}^{\mathbb N}$, let $\mathcal{C}_0=\{(x_i)_{i=1}^{\infty}\in X : x_1=0\}$ and let $\mathcal{C}_1=\{(x_i)_{i=1}^{\infty}\in X : x_1=1\}$. We define $F\colon X\to 2^X$ by $F(x)=\{x\}$ if $x\in \mathcal{C}_0$ and $F(x)=\mathcal{C}_0$ whenever $x\in \mathcal{C}_1$. Note that we have that $\mathcal{O}_F(p)$ is homeomorphic to $\mathcal{C}_0$ where $p\in \mathcal{C}_1$. Since  $\mathcal{C}_0$ is homeomorphic to $X$, $\mathcal{O}_F(p)$ is homeomorphic to $X$.

\bigskip

The previous examples illustrate the same idea in constructing the upper semicontinuous function. These examples can be viewed as a particular case of the following example.

\bigskip

        \item Let $X$ be a compactum. Suppose that $X=M\cup N$, where $M$ and $N$ are nonempty and disjoint closed subsets of $X$ such that there exists $Y\subseteq M$ homeomorphic to $X$. Let $F\colon X\to 2^X$ be given by 
        $$F(x)=\begin{cases}
            Y, &\text{ if }x\in N;\\
            \{x\}, &\text{ if }x\in M.
        \end{cases}$$
        Let $p\in N$. Then $\mathcal{O}_F(p)=\{(p,y,y,\ldots) : y\in Y\}$ is homeomorphic to $Y$. Therefore, $\mathcal{O}_F(p)$ is homeomorphic to $X$.
        
        \item Let $X=[0,1]$ and let $F\colon X\to 2^X$ given such that $F(0)=[\frac{1}{2},1]$, $F(t)=\{t\}$, for each $t\in [\frac{1}{2},1]$, and $\mathrm{Gr}(F)$ is pictured in Figure~\ref{fig:enter-labelegt34}. Observe that $\mathcal{O}_F(0)=\{(0,t,t,\ldots) : t\in [\frac{1}{2},1]\}$ is homeomorphic to $[0,1]$.
         \begin{figure}[h]
    \centering
\begin{tikzpicture}[scale=1]
\draw[->,very thin,gray] (-1,0) to (2.5,0);
\draw[->,very thin,gray] (0,-1) to (0,2.5);
\draw (0,2) -- (0,1) -- (0.5,0) -- (1,1) -- (2,2);
\draw[dotted] (0,2) -- (2,2) --  (2,0);
%\draw (-0.15,0.5) node {$\frac{1}{2}$};
%\draw (-0.15,1) node {$1$};
\end{tikzpicture}
    \caption{$\mathrm{Gr}(F)$.}
    \label{fig:enter-labelegt34}
\end{figure}
    \end{enumerate}
\end{example}

We present an example in $[0,1]$ that illustrates an infinite-dimensional continuum.

\begin{proposition}\label{ex0}
There exists an upper semicontinuous function $F\colon [0,1]\to 2^{[0,1]}$ such that $\mathcal{O}_F(t)$ is an infinite-dimensional continuum.    
\end{proposition}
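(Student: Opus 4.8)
The plan is to exhibit the simplest possible choice of $F$ and read off the orbit set directly. Take $F\colon[0,1]\to 2^{[0,1]}$ to be the constant multifunction $F(t)=[0,1]$ for every $t\in[0,1]$. First I would verify that $F$ is admissible: its graph is $\mathrm{Gr}(F)=[0,1]\times[0,1]$, which is closed in $[0,1]\times[0,1]$, so by Proposition~\ref{prop7uh} (the equivalence between upper semicontinuity and closedness of the graph) $F$ is upper semicontinuous; moreover $F(t)=[0,1]$ is connected for each $t$.

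Next I would compute the orbit set. Since the requirement $x_{n+1}\in F(x_n)=[0,1]$ imposes no restriction beyond $x_{n+1}\in[0,1]$, for any fixed $t$ one gets
\[
\mathcal{O}_F(t)=\{(x_n)_{n\in\mathbb N}\in[0,1]^{\mathbb N}: x_1=t\}=\{t\}\times[0,1]^{\mathbb N}.
\]
The projection that deletes the (constant) first coordinate is a homeomorphism of $\{t\}\times[0,1]^{\mathbb N}$ onto the Hilbert cube $[0,1]^{\mathbb N}$, since the metric $\rho$ of (\ref{eqmet}) induces the product topology; hence $\mathcal{O}_F(t)$ is homeomorphic to the Hilbert cube. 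That $\mathcal{O}_F(t)$ is a continuum also follows formally from Theorem~\ref{theom}, because $F(t)$ is connected for every $t$; alternatively it is immediate, as the Hilbert cube is compact and connected.

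Finally I would argue infinite-dimensionality. For each $n\in\mathbb N$ the subset of $[0,1]^{\mathbb N}$ consisting of sequences whose coordinates past the $n$-th are all $0$ is a homeomorphic copy of the cube $[0,1]^{n}$, whose covering dimension equals $n$. Since covering dimension is monotone on subspaces of separable metric spaces and $\mathcal{O}_F(t)$ is such a space, $\dim\mathcal{O}_F(t)\geq n$ for every $n$, so $\mathcal{O}_F(t)$ is infinite-dimensional. Combining the three steps yields an upper semicontinuous $F$ for which $\mathcal{O}_F(t)$ is an infinite-dimensional continuum (in fact for every $t\in[0,1]$).

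I do not expect a genuine obstacle here: the content lies entirely in choosing the constant, full-valued $F$, after which everything reduces to the standard facts that the Hilbert cube is a continuum and is infinite-dimensional. The only point requiring a little care is the infinite-dimensionality claim, which I would handle by the embedded-cube argument above rather than by a direct covering computation. If instead one wanted the orbit set to be a \emph{proper} infinite-dimensional subcontinuum of $[0,1]^{\mathbb N}$, matching the more illustrative, graph-drawable style of the preceding examples, one would design $F$ so that along the orbit it repeatedly offers an independent nondegenerate interval of choices; this is more delicate but unnecessary for the existence statement.
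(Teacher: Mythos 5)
Your proof is correct, and since the proposition only asserts existence, the constant multifunction $F(t)=[0,1]$ does the job: the orbit set is literally $\{t\}\times[0,1]^{\mathbb N}$, a Hilbert cube, and your verification of upper semicontinuity (closed graph, Proposition~\ref{prop7uh}), connectedness (Theorem~\ref{theom}, or directly), and infinite-dimensionality (embedded $n$-cells plus monotonicity of covering dimension) is sound. The paper follows the same basic route --- exhibit an explicit $F$ and locate a Hilbert cube inside an orbit set --- but with a less degenerate example: it fixes $r\in[0,1]$ and sets $F(r)=[0,1]$ and $F(t)=\{r\}$ for $t\neq r$, so that $F$ is single-valued except at the single point $r$. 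The orbit set is then a proper subcontinuum of $[0,1]^{\mathbb N}$, and infinite-dimensionality is obtained by embedding $[0,1]^{\mathbb N}$ via $(s_n)_{n=1}^{\infty}\mapsto(t_0,r,s_1,r,s_2,r,\ldots)$, interleaving the free choices with forced returns to $r$. What the paper's version buys is the more striking fact that an infinite-dimensional orbit set can arise from an $F$ that is ``almost'' single-valued --- exactly the refinement you flag in your closing sentence as more delicate but unnecessary for the bare existence statement. Both arguments are complete.
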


\begin{proof}
        Let $r\in [0,1]$ and let $F\colon [0,1]\to 2^{[0,1]}$ be given by $$F(t)=\begin{cases}
        [0,1],& \text{ if }t=r;\\
        \{r\},& \text{ if }t\in [0,1]\setminus \{r\}.
    \end{cases}$$
    
    Let $t_0\in [0,1]$ and let 
    $\varphi\colon [0,1]^{\mathbb N}\to \mathcal{O}_F(t_0)$ be given for each $(s_n)_{n=1}^{\infty}\in [0,1]^{\mathbb N}$, by 
    $$\varphi ((s_n)_{n=1}^{\infty})=(t_0,r,s_1,r,s_2,r,s_3,r,\ldots).$$ 
 Observe that $\varphi$ is an embedding. Thus, $\mathrm{dim}(\mathcal{O}_F(t_0))=\infty$. Since $F(t)$ is connected for each $t\in [0,1]$, $\mathcal{O}_F(t_0)$ is a continuum, by  Theorem~\ref{theom}.
    
%    We show that $\mathcal{O}_F(t_0)$ is connected. For each $n\geq 3$, let $$\mathcal{B}_n=\{(s_i)_{i=1}^{\infty} : s_i=0\text{ for all }i\geq n\}.$$     Given $x\in \mathcal{B}_n$, observe that $\xi_x\colon [0,1]\to B_n$ given by $\xi_x(t)=tx+(1-t)p$, for each $t\in [0,1]$, where $p=(t_0,0,0,0,\ldots)$, is a map. Hence, $B_n$ is arcwise connected for all $n\geq 3$. Also, $p\in B_n$ for each $n\geq 3$. Thus, $\bigcup_{n=3}^{\infty}\mathcal{B}_n$ is arcwise connected. Note that $\mathcal{O}_F(t_0)\subseteq \mathrm{cl}(\bigcup_{n=3}^{\infty}\mathcal{B}_n)$. Therefore, $\mathcal{O}_F(t_0)$ is connected. By Theorem~\ref{theo87g5}, $\mathcal{O}_F(t_0)$ is a continuum.
\end{proof}

Now, we present examples of dendrites that can be represented as an orbit set of the arc.

\begin{example}\label{exm8uh6}
    Let $F\colon [0,1]\to 2^{[0,1]}$ be defined by $$F(t)=\begin{cases}
        [0,1], &\text{ if }t=0;\\
        \{t\}, &\text{ if }t\neq 0.
    \end{cases}$$
    Then $\mathcal{O}_F(0)$ is homeomorphic to $F_{\omega}$, where $F_{\omega}$ is the infinite union of segments joining $(0,0)$ with $(\frac{1}{n},\frac{1}{n^2})$, $n\in\mathbb N$, as represented in  Figure~\ref{fig:enter-labelegt}.
    \begin{figure}[h]
    \centering
\begin{tikzpicture}[scale=3]
\draw (0,0) to (1,1);
\draw (0,0) to (0.5,0.25);
\draw (0,0) to (0.3,0.09);
\draw[dotted] (0.12,0) to (0.19,0.03);
%\draw (-0.15,0.5) node {$\frac{1}{2}$};
%\draw (-0.15,1) node {$1$};
\end{tikzpicture}
    \caption{Dendrite $F_{\omega}$.}
    \label{fig:enter-labelegt}
\end{figure}
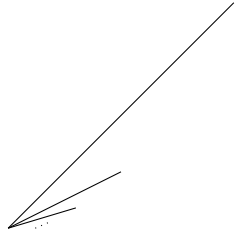

For each $n\in\mathbb N$, let 
\begin{multline*}
  A_n=\{(t_1,t_2,\ldots)\in [0,1]^{\mathbb N} : t_1=\cdots=t_{n}=0 \text{ and }\\ t_i=s, \text{ for each }i\geq n+1 \text{ and some }s\in (0,1]\}. 
\end{multline*}

Observe that $A_i\cap A_j=\emptyset$ whenever $i\neq j$, $A_n\cong (0,1]$ and $\overline{A_n}\setminus A_n=\{(0,0,\ldots)\}$, for every $n\in\mathbb N$.
Thus, $\overline{A_n}$ is an arc ($\overline{A_n}$ is homeomorphic to $[0,1]$) for each $n\in \mathbb N$. 
Furthermore, $\mathcal{O}_F(0)=\left(\bigcup_{n\in\mathbb N}A_n\right)\bigcup\{(0,0,\ldots)\}$ and $\lim_{n\to\infty} \mathrm{diam}(A_n)=0$. Therefore, $\mathcal{O}_F(0)$ is the dendrite $F_{\omega}$ represented in Figure~\ref{fig:enter-labelegt}.

\end{example}
\begin{example}\label{exoif6hdt7}
    Let $F\colon [0,1]\to 2^{[0,1]}$ be given by $$F(t)=\begin{cases}
        [0,1], &\text{ if }t\in\{0,1\};\\
        \{t\}, &\text{ if }t\in (0,1).
    \end{cases}$$
\end{example}
\noindent We describe the orbit of ``$0$'', $\mathcal{O}_F(0)$. Let $A_{\emptyset}=\{(0,t,t,\ldots) : t\in (0,1)\}$. We have that $A_{\emptyset}\subseteq \mathcal{O}_F(0)$ and $\overline{A_{\emptyset}}=A_{\emptyset}\cup\{(0,0,0,\ldots), (0,1,1,\ldots)\}$. Moreover, $\overline{A_{\emptyset}}$ is homeomorphic to $[0,1]$.

For each $(a_1,\ldots,a_k)\in \{0,1\}^k$, let 
$$A_{a_1\cdots a_k}=\{(0,a_1,\ldots,a_k,t,t,\ldots) : t\in (0,1)\}.$$
Observe that $A_{a_1\cdots a_k}\subseteq \mathcal{O}_F(0)$,
$$\overline{A_{a_1\cdots a_k}}=A_{a_1\cdots a_k}\cup\{(0,a_1,\ldots,a_k,0,0,\ldots),(0,a_1,\ldots,a_k,1,1,\ldots)\},$$ 
and $\overline{A_{a_1\cdots a_k}}$ is homeomorphic to $[0,1]$, for each $(a_1,\ldots,a_k)\in \{0,1\}^k$. 
Note that we have the following:
\begin{itemize}
    \item $A_{a_1\cdots a_k}\cap A_{b_1\cdots b_l}=\emptyset$, whenever $(a_1,\ldots,a_k)\neq (b_1,\ldots,b_l)$.
    \item Given $(a_1,\ldots,a_k)\in \{0,1\}^k$,
    \begin{multline*}
        \overline{A_{a_1\cdots a_k}}\cap \overline{A_{a_1\cdots a_k0}}=\{(0,a_1,\ldots,a_k,0,0,\ldots)\} \text{ and }\\ 
    \overline{A_{a_1\cdots a_k}}\cap \overline{A_{a_1\cdots a_k1}}=\{(0,a_1,\ldots,a_k,1,1,\ldots)\}.
    \end{multline*}
    \item $\mathrm{diam}(\overline{A_{a_1\cdots a_k}})=\frac{1}{2^k}$ for each $(a_1,\ldots,a_k)\in \{0,1\}^k$ (use (\ref{eqmet})).
    \item $\mathcal{O}_F(0)=\left(\bigcup\{A_{a_1\cdots a_k} : (a_1,\ldots,a_k)\in \{0,1\}^k, k\in\mathbb N\}\right)\cup A_{\emptyset}\cup \{0,1\}^{\mathbb N}$.
\end{itemize}
By Theorem~\ref{theom}, $\mathcal{O}_F(0)$ is a continuum. We represent the orbit set $\mathcal{O}_F(0)$ in Figure~\ref{figCantor}.
\begin{figure}[h]
    \centering
\begin{tikzpicture}[scale=0.8]
\draw (1,0) arc (0:180:0.5cm);
\draw (3,0) arc (0:180:0.5cm);
\draw (7,0) arc (0:180:0.5cm);
\draw (9,0) arc (0:180:0.5cm);
\draw (9,0) arc (0:180:1.5cm);
\draw (3,0) arc (0:180:1.5cm);

\draw (0.333,0) arc (0:180:0.1665cm);
\draw (2.333,0) arc (0:180:0.1665cm);
\draw (6.333,0) arc (0:180:0.1665cm);
\draw (8.333,0) arc (0:180:0.1665cm);

\draw (1,0) arc (0:180:0.1665cm);
\draw (3,0) arc (0:180:0.1665cm);
\draw (7,0) arc (0:180:0.1665cm);
\draw (9,0) arc (0:180:0.1665cm);

\draw (9,0) arc (0:180:4.5cm);

\draw[very thin,dotted] (0,0)-- (0.333,0);
\draw[very thin,dotted] (0.666,0)-- (1,0);

\draw[very thin,dotted] (2,0)-- (2.333,0);
\draw[very thin,dotted] (2.666,0)-- (3,0);

\draw[very thin,dotted] (6,0)-- (6.333,0);
\draw[very thin,dotted] (6.666,0)-- (7,0);

\draw[very thin,dotted] (8,0)-- (8.333,0);
\draw[very thin,dotted] (8.666,0)-- (9,0);

\draw (12.3,0) node {$\{0,1\}^{\mathbb N}$};
\draw (4.5,5.2) node {\small{$A_{\emptyset}$}};
\draw (-0.5,-0.5) node {\small{$(0,0,0,\ldots)$}};
\draw (9.5,-0.5) node {\small{$(0,1,1,\ldots)$}};
\draw (1.5,2) node {\small{$A_{0}$}};
\draw (7.5,2) node {\small{$A_{1}$}};
\draw (3.2,-0.5) node {\small{$(0,0,1,\ldots)$}};
\draw (5.8,-0.5) node {\small{$(0,1,0,\ldots)$}};
\draw (0,0) node {\tiny{$\bullet$}};
\draw (3,0) node {\tiny{$\bullet$}};
\draw (6,0) node {\tiny{$\bullet$}};
\draw (9,0) node {\tiny{$\bullet$}};
\draw (0.7,0.8) node {\small{$A_{00}$}};
\draw (2.3,0.8) node {\small{$A_{01}$}};
\draw (6.7,0.8) node {\small{$A_{10}$}};
\draw (8.3,0.8) node {\small{$A_{11}$}};
\end{tikzpicture}
    \caption{Continuum $\mathcal{O}_F(0)$.}
    \label{figCantor}
\end{figure}
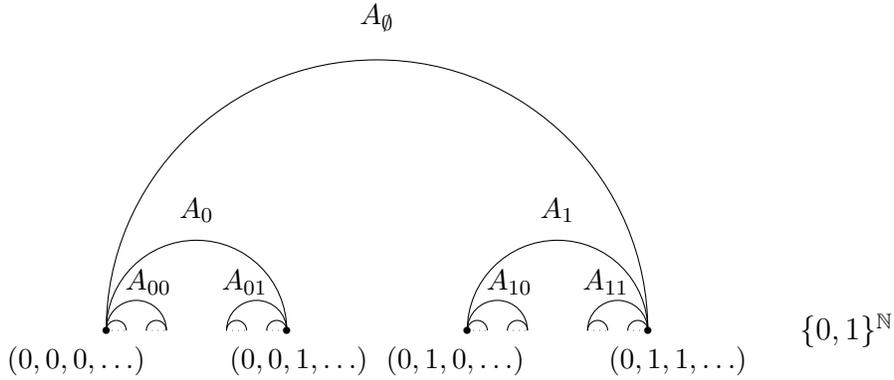
\medskip

Let $\mathcal{I}=\{Z:Z \text{ is a continuum and 
there exist }F\colon [0,1]\to 2^{[0,1]} \text{ and }t_0\in [0,1],\break \text{ where }\mathcal{O}_F(t_0)\cong Z \}$.

\begin{problem}
    Determine which continua $Z$ belong to $\mathcal{I}$.
\end{problem}

Now, we present some examples using the set function $\T$. Note that if $X$ is an aposyndetic continuum, then $\T(\{x\})=\{x\}$ for each $x\in X$ \cite[Theorem~3.1.28]{MT}. Hence, if $T\colon X\to 2^X$ is given as a Remark~\ref{defTT}, we see that $\mathcal{O}_T(x)$ is a single point for all $x\in X$.

\begin{remark}
    Let $X$ be an indecomposable continuum. Let $T\colon X\to 2^X$ be defined by $T(x)=\T(\{x\})$ for each $x\in X$. By \cite[Theorem~3.1.39]{MT}, $T(x)=X$ for each $x\in X$. Thus, $\mathcal{O}_T(x)$ is homeomorphic to $X^\mathbb N$ for each $x\in X$.
\end{remark}

\begin{example}
Let $X$ be the harmonic fan; that is, $X=\bigcup_{i=0}^{\infty} \overline{oa_i}$, where $a_i=(1,\frac{1}{i})$ whenever $i\neq 0$, $a_0=(1,0)$, $o=(0,0)$ and $\overline{ab}=\{at+(1-t)b : t\in [0,1]\}$ for each $a,b\in \mathbb R^2$. Let $T\colon X\to 2^X$ be defined by $T(x)=\T(\{x\})$ (Remark~\ref{defTT}). Observe that $$\mathcal{O}_T(o)=\{(o,x_2,x_3,\ldots): x_{n+1}\in \overline{x_na_0} \text{ for each }n\geq 2\}.$$ Therefore, $\mathcal{O}_T(o)$ is homeomorphic to the Hlbert cube $[0,1]^{\mathbb N}$.   
\end{example}

\begin{example}\label{Xhomogcont}
Let $X$ be a decomposable homogeneous continuum, and let
$T\colon X\to 2^X$ be given by $T(x)=\T(\{x\})$, for each $x\in X$. By
\cite[Theorem 5.1.18]{MT}, $\G=\{\T(\{x\})\ |\ x\in X\}$ is a 
continuous decomposition into continua. Let $x\in X$. Then, by definition,
$T^2(x)=\bigcup\{T(z)\ |\ z\in T(x)\}=\bigcup\{\T(\{z\})\ |\ z\in\T(\{x\})\}$.
Let $z\in\T(\{x\})$. Note that, by \cite[Proposition 2.1.7]{MS}, 
$\T(\{z\})\subset\T^2(\{x\})$. Since $\T$ is idempotent on closed sets 
\cite[Theorem 4.2.32]{MT}, we have that $\T^2(\{x\})=\T(\{x\})$. Hence,
$\T(\{z\})\subset\T(\{x\})$. Thus, since $\G$ is a decomposition,
we obtain that $\T(\{z\})=\T(\{x\})$. Therefore, we have that
$T^2(x)=\bigcup\{\T(\{z\})=\T(\{x\})\ |\ z\in\T(\{x\})\}=\T(\{x\})=T(x)$,
for all $x\in X$. Hence, for every $x\in X$, 
$\OO_T(x)=\{x\}\times\prod_{n=1}^\infty\T(\{x\})_n$, where $\T(\{x\})_n=\T(\{x\})$,
for each positive integer $n$. 
Therefore, the orbit sets of $T$ are continua. 
\end{example}

Given an onto map $f\colon X\to X$ where $X$ is a compactum, the map $F\colon X\to 2^X$ given by $F(x)=f^{-1}(x)$, for each $x\in X$, is upper semicontinuous \cite[Exercise~7.15$(b)$]{N2}. 
Observe that if $x_0\in X$, then 
$$\mathcal{O}_F(x_0)=\{(z_n)_{n=1}^{\infty}\in X^{\mathbb N} : z_1=x_0 \text{ and } z_n=f(z_{n+1}) \text{ for all }n\in\mathbb N\}.$$
Thus, 
$$\bigcup\{\mathcal{O}_F(x) : x\in X\}=\underleftarrow{\lim} \{X,f\}.$$ 
Therefore, $\bigcup\{\mathcal{O}_F(x) : x\in X\}$ is compact. We generalize this remark with the following theorems.

\begin{theorem}\label{theo8uhu7yh7}
Let $X$ be a compactum and let $f_1,\ldots,f_k\colon X\to X$ onto maps. Let $F\colon X\to 2^X$ be defined by $F(x)=f_1^{-1}(x)\cup\cdots\cup f_k^{-1}(x)$, for every $x\in X$. For each $p=(p_1, p_2, \ldots)\in \{1,\ldots,k\}^{\mathbb N}$, we define
\begin{equation}\label{eq00o0}
    X_p=\underleftarrow{\lim}\{X,f_{p_n}\}_{n\in\mathbb N}.
\end{equation}
Then  
\begin{equation}\label{eq13}
    \bigcup\{\mathcal{O}_F(x) : x\in X\}=\bigcup \{X_p : p\in \{1,\ldots,k\}^{\mathbb N}\}.
\end{equation}    
\end{theorem}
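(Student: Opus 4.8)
The plan is to prove the set equality by a double inclusion, translating the two defining conditions into a common language. The key observation is a coordinate-wise dictionary: a sequence $(z_n)_{n=1}^{\infty}\in X^{\mathbb N}$ lies in $\bigcup\{\mathcal{O}_F(x):x\in X\}$ if and only if $z_{n+1}\in F(z_n)$ for every $n$, and by the definition of $F$ this holds exactly when, for each $n$, there is some index $j\in\{1,\ldots,k\}$ with $z_{n+1}\in f_j^{-1}(z_n)$, i.e.\ with $f_j(z_{n+1})=z_n$. On the other side, recalling that the inverse limit in (\ref{eq00o0}) unwinds to
$$X_p=\{(z_n)_{n=1}^{\infty}\in X^{\mathbb N} : z_n=f_{p_n}(z_{n+1})\text{ for all }n\in\mathbb N\},$$
membership of $(z_n)$ in $\bigcup\{X_p:p\in\{1,\ldots,k\}^{\mathbb N}\}$ means precisely that there is a single choice sequence $p=(p_1,p_2,\ldots)$ with $f_{p_n}(z_{n+1})=z_n$ for all $n$. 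So the whole proof reduces to matching ``an index exists at each coordinate'' with ``a sequence of indices exists.''

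For the inclusion $\supseteq$, I would take $(z_n)\in X_p$ for some $p\in\{1,\ldots,k\}^{\mathbb N}$. Then $z_n=f_{p_n}(z_{n+1})$, hence $z_{n+1}\in f_{p_n}^{-1}(z_n)\subseteq F(z_n)$ for every $n$, so $(z_n)\in\mathcal{O}_F(z_1)\subseteq\bigcup\{\mathcal{O}_F(x):x\in X\}$. This direction is immediate and uses no choice beyond reading off the given $p$.

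For the inclusion $\subseteq$, I would take $(z_n)\in\mathcal{O}_F(x)$ for some $x\in X$, so $z_{n+1}\in F(z_n)=f_1^{-1}(z_n)\cup\cdots\cup f_k^{-1}(z_n)$ for each $n$. For every $n$ the set $\{j\in\{1,\ldots,k\}: f_j(z_{n+1})=z_n\}$ is nonempty, so I may select one such index and call it $p_n$; this defines $p=(p_1,p_2,\ldots)\in\{1,\ldots,k\}^{\mathbb N}$ satisfying $z_n=f_{p_n}(z_{n+1})$ for all $n$, whence $(z_n)\in X_p$. Combining the two inclusions gives the equality (\ref{eq13}).

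There is no deep obstacle here; the argument is essentially a bookkeeping translation between the defining conditions. The only point deserving a word of care is the index selection in the $\subseteq$ direction: at each coordinate the witnessing index need not be unique (when $z_{n+1}$ lies in several $f_j^{-1}(z_n)$), but since we only need \emph{some} valid $p$, fixing one index per coordinate suffices and produces a well-defined element of $\{1,\ldots,k\}^{\mathbb N}$. One should also be consistent with the inverse-limit indexing convention already fixed earlier in the section (the single-map computation preceding this theorem), so that $X_p$ is read as the sequences with $z_n=f_{p_n}(z_{n+1})$; with that convention the two descriptions coincide verbatim.
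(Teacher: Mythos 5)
Your proof is correct and follows essentially the same route as the paper: the inclusion $\supseteq$ by reading off the given index sequence $p$, and the inclusion $\subseteq$ by selecting, for each coordinate, one index $p_n$ witnessing $z_{n+1}\in f_{p_n}^{-1}(z_n)$. The only difference is cosmetic — the paper additionally remarks that $F$ is upper semicontinuous as a finite union of upper semicontinuous functions, which is not needed for the set equality itself.
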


\begin{proof}
Note that a finite union of upper semicontinuous functions is an upper semicontinuous function. Hence, $F$ is an upper semicontinuous.
Let $p=(p_1, p_2, \ldots)\in \{1,\ldots,k\}^{\mathbb N}$. 
Observe that if $(x_n)_{n=1}^{\infty}\in X_p$, then $x_n=f_{p_n}(x_{n+1})$,  
for each $n\in\mathbb N$. Hence, $x_{n+1}\in f_{p_n}^{-1}(x_n)\subseteq F(x_n)$. 
Thus, $(x_n)_{n=1}^{\infty}\in \mathcal{O}_F(x_1)$, and we have that 
$\bigcup \{X_p : p\in \{1,\ldots,k\}^{\mathbb N}\}\subseteq \bigcup\{\mathcal{O}_F(x) : x\in X\}$.

Conversely, let $(x_n)_{n=1}^{\infty}\in \mathcal{O}_F(x_1)$. 
Given $n\in \mathbb N$, let $q_n\in\{1,\ldots,k\}$ be such that 
$x_{n+1}\in f_{q_n}^{-1}(x_n)$. Hence, $(x_n)_{n=1}^{\infty}\in X_q$, 
where $q=(q_1, q_2,\ldots)$. Thus, 
$\bigcup\{\mathcal{O}_F(x) : x\in X\}\subseteq \bigcup \{X_p : p\in \{1,\ldots,k\}^{\mathbb N}\}.$ 
Therefore, $\bigcup\{\mathcal{O}_F(x) : x\in X\}=\bigcup \{X_p : p\in \{1,\ldots,k\}^{\mathbb N}\}.$
\end{proof}

\begin{theorem}\label{theoh5h5h5h}
    Let $X$ be a continuum and let $f_1,\ldots,f_k\colon X\to X$ be onto maps such that there exists $x_0\in X$ for which $f_1^{-1}(x_0)\cap\cdots\cap f_k^{-1}(x_0)\neq\emptyset$.
    %{\red As I understand, this element is fixed, so it should be $x_0$}. 
    If $F\colon X\to 2^X$ is defined by $F(x)=f_1^{-1}(x)\cup\cdots\cup f_k^{-1}(x)$, for each $x\in X$, then $$\bigcup\{\mathcal{O}_F(x) : x\in X\}$$ is a continuum.
\end{theorem}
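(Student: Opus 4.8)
The plan is to combine the compactness furnished by Theorem~\ref{theogf76hy3m} with the inverse–limit description of Theorem~\ref{theo8uhu7yh7}, thereby reducing everything to a connectedness statement. Writing $\Sigma=\{1,\ldots,k\}^{\mathbb N}$ and $K=\bigcup\{X_p : p\in\Sigma\}$, note first that $F$ is upper semicontinuous (a finite union of such functions), so Theorem~\ref{theogf76hy3m} gives that $K$ is compact. Next, each $X_p=\underleftarrow{\lim}\{X,f_{p_n}\}$ is an inverse limit of continua and hence is itself a nonempty continuum, and by Theorem~\ref{theo8uhu7yh7} we have $\bigcup\{\mathcal{O}_F(x):x\in X\}=K$. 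Thus only the connectedness of $K$ must be established.

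The engine is the hypothesis. I would fix $y_0\in f_1^{-1}(x_0)\cap\cdots\cap f_k^{-1}(x_0)$, so that $f_i(y_0)=x_0$ for every $i$, and prove the following key lemma: if $p,q\in\Sigma$ differ in exactly one coordinate $m$, then $X_p\cap X_q\neq\emptyset$. I would build a single sequence $(x_n)$ lying in both limits by setting $x_{m+1}=y_0$, so that $x_m=f_{p_m}(y_0)=f_{q_m}(y_0)=x_0$ irrespective of which index is used; then defining $x_{m-1},\ldots,x_1$ by applying the common maps $f_{p_n}=f_{q_n}$ downward, and defining $x_{m+2},x_{m+3},\ldots$ by choosing successive preimages under the common maps $f_{p_n}=f_{q_n}$, which exist because each $f_i$ is onto. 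This sequence satisfies the defining relation for both $p$ and $q$, so it belongs to $X_p\cap X_q$.

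From the lemma I would fix $p_0\in\Sigma$ and set $\Sigma_0=\{p\in\Sigma : p\text{ differs from }p_0\text{ in finitely many coordinates}\}$. Any two elements of $\Sigma_0$ differ in finitely many coordinates, so can be joined by a finite sequence of single–coordinate changes that stays inside $\Sigma_0$; the lemma then presents the corresponding $X_p$'s as a chain of pairwise–intersecting continua, so that $\bigcup\{X_p : p\in\Sigma_0\}$ is connected (a chained union of connected sets is connected). Finally I would prove $K=\overline{\bigcup\{X_p : p\in\Sigma_0\}}$: given $(x_n)\in X_q$ for arbitrary $q$ and $N\in\mathbb N$, the index $p^{(N)}=(q_1,\ldots,q_N,p_{0,N+1},p_{0,N+2},\ldots)$ lies in $\Sigma_0$, and I can produce a point of $X_{p^{(N)}}$ that agrees with $(x_n)$ in its first $N+1$ coordinates (keep those entries, which respect the relation since $p^{(N)}_n=q_n$ for $n\le N$, and extend by preimages using onto-ness). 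By the metric \eqref{eqmet} its distance to $(x_n)$ is at most $2^{-(N+1)}\to 0$, so $(x_n)\in\overline{\bigcup\{X_p:p\in\Sigma_0\}}$. Since $K$ is closed this yields $K=\overline{\bigcup\{X_p:p\in\Sigma_0\}}$, and the closure of a connected set is connected; hence $K$ is a continuum.

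The main obstacle is that there is \emph{no} point common to all the $X_p$: the hypothesis only forces the maps to agree at the single value $x_0$ one step back, and single–coordinate moves connect only indices differing in finitely many places. The crux is therefore the passage from the connected ``finite–difference'' union over the dense set $\Sigma_0$ to the full union $K$, which I resolve by the density/closure argument above rather than by exhibiting any single spanning continuum.
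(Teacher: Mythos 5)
Your proof is correct and follows essentially the same route as the paper: the same key claim that $X_p\cap X_q\neq\emptyset$ when $p$ and $q$ differ in one coordinate (proved via the common preimage $y_0$ of $x_0$ and surjectivity), chains of pairwise-intersecting inverse limits, and an approximation by indices modified in only finitely many coordinates. The only difference is packaging: the paper joins two given points by the chain $X_p, X_{q^1p}, X_{q^2p},\ldots$ and adjoins the single limit point $(w_n)_{n=1}^{\infty}$, whereas you take the closure of the connected union over the whole finite-difference class $\Sigma_0$; both rest on exactly the same density estimate in the product metric.
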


\begin{proof}
    By Theorem~\ref{theogf76hy3m}, $\bigcup\{\mathcal{O}_F(x) : x\in X\}$ is compact. We prove that $\bigcup\{\mathcal{O}_F(x) : x\in X\}$ is connected. Let $(z_n)_{n=1}^{\infty}$ and $(w_n)_{n=1}^{\infty}$ be points in $\bigcup\{\mathcal{O}_F(x) : x\in X\}$.

    \bigskip

    \textbf{Claim.} Let $p=(p_1, p_2, \ldots)$ and $q=(q_1, q_2, \ldots)$ be points in $\{1,\ldots,k\}^{\mathbb N}$ such that there exists $m\in\mathbb N$ where $p_i=q_i$ for each $i\neq m$. Then $X_p\cap X_q\neq\emptyset$,  where $X_p$ and $X_q$ are defined as in equation (\ref{eq00o0}).

    \medskip

    Let $y\in f_1^{-1}(x_0)\cap\cdots\cap f_k^{-1}(x_0)$. 
    Since $f_1, \ldots, f_k$ are onto maps, there exists $(x_n)_{n=1}^{\infty}\in X_p$ such that $x_m=y$. Since $y\in f_1^{-1}(x_0)\cap\cdots\cap f_k^{-1}(x_0)$, we have that $f_{q_m}(y)=f_{p_m}(y)$. Hence, $(x_n)_{n=1}^{\infty}\in X_q$ and $X_p\cap X_q\neq\emptyset$. We proved the Claim.

    \bigskip

    By Theorem~\ref{theo8uhu7yh7} (\ref{eq13}), let $p=(p_1, p_2, \ldots)$ and $q=(q_1, q_2, \ldots)$ be points in $\{1,\ldots,k\}^{\mathbb N}$ such that $(z_n)_{n=1}^{\infty}\in X_p$ and $(w_n)_{n=1}^{\infty}\in X_q$. For each $m\in \mathbb N$, let $q^mp=(q_1,\ldots,q_m,p_{m+1}, p_{m+2},\break\ldots)$. 
    By Claim, we have that $X_p\cap X_{q^1p}\neq\emptyset$, $X_{q^1p}\cap X_{q^2p}\neq\emptyset$, and in general, $X_{q^kp}\cap X_{q^{k+1}p}\neq\emptyset$, for each $k\in\mathbb N$. 
    Since for each $l\in \{1,\ldots,k\}^{\mathbb N}$, $X_l$ is a continuum  \cite[Theorem~2.1.8]{MT}, $X_p\cup\left(\bigcup_{m=1}^{\infty}X_{q^mp}\right)$ is connected. 
    Also, $(w_n)_{n=1}^{\infty}\in \overline{X_p\cup\left(\bigcup_{m=1}^{\infty}X_{q^mp}\right)}$. Hence, $X_p\cup\left(\bigcup_{m=1}^{\infty}X_{q^mp}\right)\cup \{(w_n)_{n=1}^{\infty}\}$ is a connected subset of $\bigcup\{\mathcal{O}_F(x) : x\in X\}$, containing $(z_n)_{n=1}^{\infty}$ and $(w_n)_{n=1}^{\infty}$. Therefore, $\bigcup\{\mathcal{O}_F(x) : x\in X\}$ is connected.
\end{proof}

%\begin{corollary}
%    Let $f,g\colon [0,1]\to [0,1]$ be onto maps. If $F\colon [0,1]\to 2^{[0,1]}$ is defined by $F(t)=f^{-1}(t)\cup g^{-1}(t)$ for each $t\in [0,1]$, then $\bigcup\{\mathcal{O}_F(t) : t\in [0,1]\}$ is a continuum.
%\end{corollary}

%\begin{proof}
%    Observe that there exists $s\in [0,1]$ such that $f(s)=g(s)$. Thus, the corollary follows from Theorem~\ref{theoh5h5h5h}.
%\end{proof}

\begin{corollary}
Let $X$ be an arc-like continuum, and let $f,g\colon X \to X$ be onto maps.
If $F\colon X\to 2^X$ is given by $F(x)=f^{-1}(x)\cup g^{-1}(x)$, for all $x\in X$,
then $\bigcup\{\mathcal{O}_F(x) : x\in X\}$ is a continuum.
\end{corollary}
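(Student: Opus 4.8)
The plan is to obtain the Corollary by applying Theorem~\ref{theoh5h5h5h} with $k=2$, $f_1=f$ and $f_2=g$. Since $f$ and $g$ are onto, $F(x)=f^{-1}(x)\cup g^{-1}(x)$ is a well-defined upper semicontinuous function into $2^X$, so the only hypothesis of Theorem~\ref{theoh5h5h5h} that remains to be verified is the existence of a point $x_0\in X$ with $f^{-1}(x_0)\cap g^{-1}(x_0)\neq\emptyset$. This condition is equivalent to the existence of a \emph{coincidence point}, i.e. a point $y\in X$ with $f(y)=g(y)$ (then one takes $x_0=f(y)=g(y)$, so that $y\in f^{-1}(x_0)\cap g^{-1}(x_0)$, and conversely). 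Thus the whole argument reduces to producing a coincidence point of $f$ and $g$, and this is precisely where the arc-like hypothesis enters.

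To produce the coincidence point I would exploit the approximation of $X$ by arcs. For each $\varepsilon>0$, arc-likeness provides an onto map $\phi\colon X\to[0,1]$ with $\mathrm{diam}(\phi^{-1}(t))<\varepsilon$ for every $t\in[0,1]$. Consider the composites $\alpha=\phi\circ f$ and $\beta=\phi\circ g$, which are maps $X\to[0,1]$; since $f$ and $\phi$ are onto, $\alpha$ is onto $[0,1]$. Now $h=\alpha-\beta\colon X\to[-1,1]$ is continuous on the connected space $X$, hence $h(X)$ is an interval; choosing $a,b\in X$ with $\alpha(a)=0$ and $\alpha(b)=1$ gives $h(a)=-\beta(a)\le 0$ and $h(b)=1-\beta(b)\ge 0$, so by the intermediate value property there is $x_\varepsilon\in X$ with $\alpha(x_\varepsilon)=\beta(x_\varepsilon)$. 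Then $f(x_\varepsilon)$ and $g(x_\varepsilon)$ lie in the common fiber $\phi^{-1}\!\big(\alpha(x_\varepsilon)\big)$, whence $d(f(x_\varepsilon),g(x_\varepsilon))<\varepsilon$.

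Applying this with $\varepsilon=\tfrac1n$ yields a sequence $(x_n)$ with $d(f(x_n),g(x_n))<\tfrac1n$; by compactness of $X$ I would pass to a convergent subsequence $x_n\to y$, and continuity of $f$, $g$ and $d$ gives $d(f(y),g(y))=0$, i.e. $f(y)=g(y)$. Setting $x_0=f(y)=g(y)$ verifies the hypothesis of Theorem~\ref{theoh5h5h5h}, which then guarantees that $\bigcup\{\mathcal{O}_F(x):x\in X\}$ is a continuum. The main obstacle is exactly the coincidence step: general continua admit fixed-point-free or coincidence-free self-maps, so the essential use of ``arc-like'' is to transfer the one-dimensional intermediate value argument on $[0,1]$ back to $X$ through the $\varepsilon$-maps with small fibers; the remaining steps (compactness extraction and the final citation of Theorem~\ref{theoh5h5h5h}) are routine.
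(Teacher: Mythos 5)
Your proof is correct and follows the paper's route exactly: reduce the hypothesis of Theorem~\ref{theoh5h5h5h} to the existence of a coincidence point $y$ with $f(y)=g(y)$ (so that $x_0=f(y)=g(y)$ works) and then apply that theorem. The only difference is that the paper simply cites \cite[Corollary~2.6.10]{MT} for the coincidence point on an arc-like continuum, whereas you reprove that fact directly --- and correctly --- via the $\varepsilon$-maps onto $[0,1]$, the intermediate value argument, and a compactness extraction.
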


\begin{proof}
Observe that, by \cite[Corollary 2.6.10]{MT}, there exists $x\in X$ such that $f(x)=g(x)$.
Now, the corollary follows from Theorem~\ref{theoh5h5h5h}.
\end{proof}

\section{Dense orbits and transitivity}\label{DensOrbTrans}

Let $X$ be a compactum and let $F\colon X\to 2^X$ be an upper semicontinuous function and let $p\in X$. We say that:
\begin{enumerate}
    \item $F$ is \textit{transitive} provided for each pair of nonempty open subsets $U$ and $V$ of $X$, there exist $x\in U$ and $k\in\mathbb N$ such that $F^k(x)\cap V\neq\emptyset$.
    
    \item $p$ has a \textit{dense orbit} if there exists an orbit $(x_n)_{n=1}^{\infty}\in \mathcal{O}_F(p)$ such that for all open subset $U$ of $X$, there exists $k\in\mathbb N$ where $x_k\in U$.
    
    \item $p$ has a \textit{weak dense orbit} provided for each open subset $U$ of $X$, there exists $k\in \mathbb N$ such that $F^k(p)\cap U\neq\emptyset$.

\end{enumerate}

\begin{proposition}\label{proptgf6r}
    Let $X$ be a compactum with no isolated points and let $F\colon X\to 2^X$ be an upper semicontinuous function. Given the following:
    \begin{enumerate}
        \item There exists $p\in X$ such that $p$ has a dense orbit;
        
        \item $F$ is transitive;
        
        \item There exists $p\in X$ such that $p$ has a weak dense orbit.
    \end{enumerate}
    Then 1 implies 2, and 1 implies 3. Moreover, if $F$ is continuous, then 2 implies 3.
\end{proposition}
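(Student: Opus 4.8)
The plan is to establish the three implications separately. The two implications issuing from statement~1 rest on a single observation about spaces with no isolated points, while $2\Rightarrow 3$ is the substantive one and will be obtained by a Baire category argument in which the continuity hypothesis is used precisely to guarantee openness of the relevant sets.

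First I would record the auxiliary fact that in a compactum $X$ with no isolated points every nonempty open set is infinite, so a dense set remains dense after deleting finitely many points. Consequently, if $(x_n)_{n=1}^{\infty}$ is a dense orbit and $U$ is nonempty open, then $x_n\in U$ for infinitely many $n$; in particular, for any prescribed $N$ there is an index $n>N$ with $x_n\in U$. For $1\Rightarrow 2$, given nonempty open $U,V$, use density to pick $k$ with $x_k\in U$ and then the auxiliary fact to pick $l>k$ with $x_l\in V$. Since $x_{i+1}\in F(x_i)$ for all $i$, an induction using (\ref{eq00}) gives $x_l\in F^{l-k}(x_k)$, so $x_k\in U$ and $F^{l-k}(x_k)\cap V\neq\emptyset$; hence $F$ is transitive. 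For $1\Rightarrow 3$, take the same dense orbit from $p$ (so $x_1=p$); given nonempty open $U$, the auxiliary fact yields an index $j\geq 2$ with $x_j\in U$, and again $x_j\in F^{j-1}(p)$, whence $F^{j-1}(p)\cap U\neq\emptyset$. As $U$ is arbitrary, $p$ has a weak dense orbit.

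The main work is $2\Rightarrow 3$ under the additional hypothesis that $F$ is continuous. Fix a countable base $\{U_n\}_{n\in\mathbb{N}}$ of nonempty open subsets of $X$ and, for nonempty open $V$, set
\[
W(V)=\{y\in X:\ F^k(y)\cap V\neq\emptyset\ \text{for some }k\in\mathbb{N}\}=\bigcup_{k\in\mathbb{N}}(F^k)^{-1}(\langle X,V\rangle).
\]
Any point $p\in\bigcap_{n}W(U_n)$ automatically has a weak dense orbit, because every nonempty open $U$ contains some $U_n$. Since $X$ is a compact metric space it is a Baire space, so it suffices to show that each $W(U_n)$ is open and dense. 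Density is exactly transitivity: given nonempty open $U$, applying transitivity to the pair $(U,U_n)$ produces $x\in U$ and $k$ with $F^k(x)\cap U_n\neq\emptyset$, i.e. $x\in U\cap W(U_n)$. Openness is the crux and is where continuity enters: because $F$ is continuous it is lower semicontinuous, so by Theorem~\ref{theo9yr34rs} each $F^k$ is lower semicontinuous, and then Proposition~\ref{prophfg6tg} gives that $(F^k)^{-1}(\langle X,V\rangle)$ is open for every $k$; thus $W(V)$, being a union of open sets, is open. The Baire category theorem applied to the dense open sets $W(U_n)$ then produces a (dense $G_{\delta}$, hence nonempty) point $p$ with a weak dense orbit.

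The delicate step I expect to be the main obstacle is the openness of $W(V)$: it genuinely requires lower semicontinuity of \emph{all} iterates $F^k$, which is exactly why continuity (and not mere upper semicontinuity) is assumed in this last implication; the remaining ingredients — transitivity for density and the Baire property of the compactum for nonemptiness of the intersection — are routine once this is in place.
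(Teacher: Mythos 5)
Your proposal is correct and follows essentially the same route as the paper: the same deletion-of-finitely-many-points argument for $1\Rightarrow 2$ and $1\Rightarrow 3$, and the same Baire category argument for $2\Rightarrow 3$ with the sets $\bigcup_{k}(F^k)^{-1}(\langle X,U_n\rangle)$, open by lower semicontinuity of the iterates (Theorem~\ref{theo9yr34rs} and Proposition~\ref{prophfg6tg}) and dense by transitivity. Your write-up in fact makes explicit a point the paper leaves implicit, namely exactly where continuity is used to get openness of these sets.
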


\begin{proof}
    We show that \textit{1} implies \textit{2}. Let $U$ and $V$ be nonempty open subsets of $X$. Let $(x_n)_{n=1}^{\infty}\in \mathcal{O}_F(p)$ be dense in $X$. Let $k\in\mathbb N$ such that $x_k\in U$. Since $X$ has no isolated points, $V\setminus\{x_1,\ldots,x_k\}$ is a nonempty open set. Hence, there exists $l>k$ such that $x_l\in V$. Note that $x_k\in U$, $(x_{k+n-1})_{n=1}^{\infty}\in \mathcal{O}_F(x_k)$, and $x_l\in F^{l-k}(x_k)\cap V$. Thus, $F$ is transitive. Observe that \textit{1} implies \textit{3} follows from definitions.

\medskip
 
    Next, suppose that $F$ is continuous, we prove that \textit{2} implies \textit{3}. Let $\mathcal{B}=\{B_n : n\in \mathbb N\}$ be a countable base of $X$. For each $n\in\mathbb N$, let $W_n=\bigcup_{k\in\mathbb N}F^{-k}(\langle X,B_n\rangle)$. Note that $W_n$ is open for each $n\in\mathbb N$. Also, since $F$ is transitive, $W_n$ is dense for all $n$. Thus, by the Baire Category Theorem \cite[Theorem 25.2]{Will}, there exists $p\in \bigcap_{n\in\mathbb N}W_n$. Observe that if $V$ is an open set, then $B_m\subseteq V$ for some $m$. Since $p\in W_m$, $F^l(p)\in \langle X, B_m\rangle$; i.e., $F^l(p)\cap V\neq\emptyset$. Therefore, $p$ has a weak dense orbit.
\end{proof}

\begin{remark}
Let $F\colon [0,1]\to 2^{[0,1]}$ be the upper semicontinuous function defined for each $t\in [0,1]$ by: $$F(t)=\begin{cases}
    \{t\}, &\text{ if }t\in [0,1);\\ [0,1], &\text{ if }t=1.
\end{cases}$$
Note that $1$ has a weak dense orbit, but $F$ is not transitive (take two nonempty disjoint open subsets
$U$ and $V$ of $[0,1)$ and there do not exist $x\in U$ and $k\in\mathbb N$ such that $F^k(x)\cap V\neq\emptyset$). Thus, there does not exist a point $p$ with dense orbit  
(Theorem~\ref{proptgf6r}). 
\end{remark}

Next example shows a continuous function $F\colon X\to 2^X$ such that $F$ is not transitive and there exists $p\in X$ such that $p$ has a weak dense orbit.

\begin{example}
    Let $X=(\{0, 1, 2\}\times [0,1])\cup ([0,2]\times \{0\})$. Let $F\colon X\to 2^X$ be a continuous extension of $G\colon \{0, 1, 2\}\times [0,1] \to 2^{X}$ given by $$G(x)=\begin{cases}
        \{x\}, &\text{ if }x\in \{0\}\times [0,1];\\
        \{2\}\times [0,1], &\text{ if }t\in \{1\}\times [0,1];\\
        (\{0,1\}\times [0,1])\cup ([0,1]\times \{0\}), &\text{ if }t\in \{2\}\times [0,1].
    \end{cases}$$
    Observe that if $U$ and $V$ are disjoint open subsets of 
    $\{0\}\times [0,1]$, then $F(x)=\{x\}$, for each $x\in U$. Hence, $F^k(x)\cap V=\emptyset$, for all $k\in\mathbb N$. Therefore, $F$ is not transitive.
    Let $x_0\in \{2\}\times [0,1]$. We show that $x_0$ has a weak dense orbit. Let $U$ be an open subset of $X$. We consider three cases:
    \begin{enumerate}
        \item If $U\cap ((\{0,1\}\times [0,1])\cup ([0,1]\times \{0\}))\neq\emptyset$, then $F(x_0)\cap U\neq\emptyset$.
        \item If $U\cap (\{2\}\times [0,1])\neq\emptyset$, since $\{1\}\times [0,1]\subseteq F(x_0)$ and 
        $F(z)=\{2\}\times [0,1]$ for each $z\in 
        \{1\}\times [0,1]$, we have that $\{2\}\times [0,1]\subseteq F^2(x_0)$. Thus, $F^{2}(x_0)\cap U\neq\emptyset$.
        \item If $U\subseteq (1,2)\times \{0\}$, since $F$ is continuous and $\{\{(0,0)\}, (\{2\}\times [0,1])\}\subseteq F([0,1]\times \{0\})$, we obtain that $(1,2)\times \{0\}\subseteq \bigcup F([0,1]\times \{0\})$.
        Also, $[0,1]\times \{0\}\subseteq F(x_0)$. Thus, $(1,2)\times \{0\}\subseteq \bigcup F^2(x_0)$. Therefore, $F^2(x_0)\cap U\neq\emptyset$.
    \end{enumerate}
    We conclude that $x_0$ has a weak dense orbit.
\end{example}

Next example shows that if $F$ is not continuous, then we do not necessarily 
have that \textit{2} implies \textit{3} in Proposition \ref{proptgf6r}.
\begin{example}
    There exists an upper semicontinuous function $F\colon [0,1]\to 2^{[0,1]}$ such that $F$ is transitive, 
    but $x$ does not have a weak dense orbit for any $x\in X$.
\end{example}
Let $h\colon [0,1]\to [0,1]$ be the map pictured in Figure \ref{fig:enter-label}, defined for each $x\in [0,1]$ by, $$h(x)=\begin{cases}
    -2x+\frac{1}{2}, &\text{ if } x\in [0,\frac{1}{4}];\\ 
    2x-\frac{1}{2}, &\text{ if } x\in [\frac{1}{4},\frac{3}{4}];\\
    -2x+\frac{5}{2}, &\text{ if } x\in [\frac{3}{4},1].
\end{cases}$$

\begin{figure}[h]
    \centering
\begin{tikzpicture}[scale=3]
\draw[->] (-0.35,0) -- (1.35,0);
\draw[->] (0,-0.35) -- (0,1.35);
\draw (0,0.5) -- (0.25,0) -- (0.75,1) -- (1,0.5);
\draw [dashed] (0,1) -- (1,1) -- (1,0);
\draw [dashed] (0.5,0) -- (0.5,1);
\draw [dashed] (0,0.5) -- (1,0.5);
\draw (0.5,-0.15) node {$\frac{1}{2}$};
\draw (1,-0.15) node {$1$};
\draw (-0.15,0.5) node {$\frac{1}{2}$};
\draw (-0.15,1) node {$1$};
\end{tikzpicture}
    \caption{Graph of $h\colon [0,1]\to [0,1]$.}
    \label{fig:enter-label}
\end{figure}

Note that both $h|_{[0,\frac{1}{2}]}\colon [0,\frac{1}{2}]\to [0,\frac{1}{2}]$ and 
$h|_{[\frac{1}{2},1]}\colon [\frac{1}{2},1]\to [\frac{1}{2},1]$ are transitive. 
Hence, by Proposition~\ref{propB}, there exist $t\in [0,\frac{1}{2}]$ and $s\in [\frac{1}{2},1]$, with dense orbits on 
$[0,\frac{1}{2}]$ and $[\frac{1}{2},1]$, respectively. Observe that $h(\frac{1}{6})=\frac{1}{6}$ 
and $h(\frac{5}{6})=\frac{5}{6}$.

Let $F\colon [0,1]\to 2^{[0,1]}$ be defined for each $x\in [0,1]$ by:
$$F(x)=\begin{cases}
    \{h(x)\}, &\text{ if }x\in [0,1]\setminus \{\frac{1}{6},\frac{5}{6}\};\\
    \{\frac{1}{6}, s\}, &\text{ if }x=\frac{1}{6};\\
    \{\frac{5}{6}, t\}, &\text{ if }x=\frac{5}{6}.
\end{cases}$$
We show that $F$ is transitive. Let $U$ and $V$ be open subsets of $[0,1]$. 
Without loss of generality, we may assume that $U\cap [0,\frac{1}{2}]\neq\emptyset$.
Let $k\in\mathbb N$ be such that $(h|_{[0,\frac{1}{2}]})^{k}(U\cap [0,\frac{1}{2}])=[0,\frac{1}{2}]$.
We consider two cases:
\begin{enumerate}
    \item $V\cap [0,\frac{1}{2}]\neq\emptyset$. 
    \smallskip
    
    Then there exists $a\in U$ such that $h^{k}(a)\in V$. Hence,
$F^{k}(a)\cap V\neq\emptyset$.
\item $V\subseteq [\frac{1}{2},1]$. 
\smallskip

Let $z\in U$ be such that 
$h^{k}(z)=\frac{1}{6}$. Since $s$ has a dense orbit on $[\frac{1}{2},1]$ with $h|_{[\frac{1}{2},1]}$, we have that
there exists $m\in\mathbb N$ such that $h^{m}(s)\in V$. Thus, $h^{m}(s)\in F^{k+m}(z)$.
We conclude that $F^{k+m}(z)\cap V\neq\emptyset$.
\end{enumerate} 
Therefore, $F$ is transitive.

\bigskip

Let $M=(\bigcup_{j=0}^{\infty} h^{-j}(\frac{1}{6}))\cup ((\bigcup_{j=0}^{\infty} h^{-j}(\frac{5}{6}))$. If $x\in [0,1]\setminus M$, then $F^{m}(x)=\{h^{m}(x)\}\subseteq [0,\frac{1}{2}]$, if $x\in [0,\frac{1}{2}]$, or 
$F^{m}(x)=\{h^{m}(x)\}\subseteq [\frac{1}{2},1]$, if $x\in [\frac{1}{2},1]$,
for each $m\in\mathbb N$. Hence, $x$ does not have a weak dense orbit.
Let $x\in M$ and let $m\in\mathbb N$ be such that $h^m(x)\in\{\frac{1}{6},\frac{5}{6}\}$. Suppose that $h^m(x)=\frac{1}{6}$. Hence, $x_i\in \{x,h(x),\ldots,\frac{1}{6}\}\cup [\frac{1}{2},1]$, for every $i\in\mathbb N$ where $(x_i)_{i=1}^{\infty}\in \mathcal{O}_F(x)$. Thus, if $U$ is an open subset of $[0,\frac{1}{2}]\setminus\{x,h(x),\ldots,\frac{1}{6}\}$, $F^{i}(x)\cap U=\emptyset$, for all $i\in\mathbb N$. Therefore, $x$ does not have a weak dense orbit. Similarly, we have that $x$ does not have a weak dense orbit when $h^m(x)=\frac{5}{6}$.

\bigskip

Let $X$ be a compactum and let $F\colon X\to 2^{X}$ be an upper semicontinuous function. We say that:
\begin{enumerate}
    \item $F$ is \textit{dense minimal} if each point of $X$ has a dense orbit.
    \item $F$ is \textit{weak dense minimal} if each point of $X$ has a weak dense orbit.
\end{enumerate}

\begin{theorem}\label{theodminimal}
    Let $X$ be a compactum and let $F\colon X\to 2^{X}$ be an upper semicontinuous function. Then 
    $F$ is dense minimal if and only if $F$ is weak dense minimal.
\end{theorem}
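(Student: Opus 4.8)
The plan is to prove the two implications separately; the forward direction is routine, while the reverse direction rests on a concatenation-of-orbit-segments construction, which is the heart of the argument.

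For the forward direction (dense minimal $\Rightarrow$ weak dense minimal), I would fix $p\in X$ and a nonempty open set $U$, and show that some $F^k(p)$ meets $U$ with $k\geq 1$. Rather than reading a weak dense orbit directly off a dense orbit of $p$ (which is awkward when $p\in U$ is visited only at the first coordinate), I would pick any $y\in F(p)$ and invoke dense minimality at $y$ to obtain a dense orbit $(y_n)_{n=1}^{\infty}$ with $y_1=y$. Density yields $j\geq 1$ with $y_j\in U$: if $j=1$ then $y\in F(p)\cap U=F^1(p)\cap U$, while if $j\geq 2$ then, by Lemma~\ref{lem0}, $y_j\in F^{j-1}(y)\subseteq F^{j-1}(F(p))=F^j(p)$ (using the semigroup property of the iterates from (\ref{eq00})), so $F^j(p)\cap U\neq\emptyset$. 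In either case $p$ has a weak dense orbit, and since $p$ is arbitrary, $F$ is weak dense minimal; note that this argument needs no hypothesis on isolated points.

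For the reverse direction (weak dense minimal $\Rightarrow$ dense minimal), I would fix $p\in X$ and a countable base $\{B_m : m\in\mathbb{N}\}$ of nonempty open sets (available since $X$ is a compactum, hence second countable), and build a single orbit of $p$ visiting every $B_m$ by chaining finite segments. Set $q_0=p$; given $q_{m-1}$, the weak dense orbit property \emph{at the point} $q_{m-1}$, applied to $B_m$, produces $k_m\geq 1$ with $F^{k_m}(q_{m-1})\cap B_m\neq\emptyset$, and I would choose $q_m$ in this intersection. By Lemma~\ref{lem0}, $q_m\in F^{k_m}(q_{m-1})=\pi_{k_m+1}(\mathcal{O}_F(q_{m-1}))$, so there is a finite orbit segment of length $k_m$ from $q_{m-1}$ to $q_m$. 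Concatenating these segments—the endpoint $q_m$ of the $m$-th segment being the starting point of the $(m+1)$-th—yields a sequence $(x_n)_{n=1}^{\infty}$ with $x_1=p$ and $x_{n+1}\in F(x_n)$ for all $n$; since each $k_m\geq 1$ the sequence is infinite, so $(x_n)_{n=1}^{\infty}\in\mathcal{O}_F(p)$. As $q_m\in B_m$ occurs in the sequence for every $m$, the set $\{x_n : n\in\mathbb{N}\}$ meets every basic open set and is therefore dense, giving a dense orbit of $p$ and hence dense minimality of $F$.

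The main obstacle, and the only point where the full strength of \emph{weak dense minimal} (as opposed to a single point with a weak dense orbit) is used, is that the construction must restart from each newly reached point $q_{m-1}$; this requires the weak dense orbit property to hold \emph{at} $q_{m-1}$, not merely at the initial point $p$. Once this is secured, checking that the concatenation is a genuine orbit and that it is dense is straightforward, with Lemma~\ref{lem0} supplying the passage between membership in $F^{k_m}(q_{m-1})$ and the existence of the corresponding finite orbit segment.
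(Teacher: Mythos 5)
Your proposal is correct, and the reverse direction---the substantive half---is essentially identical to the paper's proof: both chain finite orbit segments through a countable base, restarting from each newly reached point $q_{m-1}$ and invoking weak dense minimality \emph{at that point} to reach the next basic open set, with Lemma~\ref{lem0} supplying the intermediate orbit segment. Your forward direction is in fact more careful than the paper's (which dismisses it as following from the definitions), since you handle the edge case where a dense orbit of $p$ meets an open set only at its first coordinate by passing to a point $y\in F(p)$ and using its dense orbit instead.
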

\begin{proof}
    By definitions we have that if $F$ is dense minimal, then $F$ is weak dense minimal. We suppose that
    $F$ is weak dense minimal. Let $x\in X$. We prove that $x$ has a dense orbit. Let $\beta=\{B_n : n\in\mathbb N\}$ be a 
    base of $X$. Since $x$ has a weak dense orbit, there exists $k_1\in\mathbb N$ such that $F^{k_1}(x)\cap B_1\neq \emptyset$.
    Let $x_{k_1+1}\in F^{k_1}(x)\cap B_1$. Let $x=x_1,x_2,\ldots,x_{k_1+1}$ be such that $x_{i+1}\in F(x_i)$ for each $i\in\{1,\ldots,k_1\}$.
    Since $x_{k_1+1}$ has a weak dense orbit, there exists $k_2\in\mathbb N$ such that $F^{k_2}(x_{k_1+1})\cap B_2\neq\emptyset$.
    Let $x_{k_1+k_2+1}\in F^{k_2}(x_{k_1+1})\cap B_2$. We define $x_{k_1+2},\ldots,x_{k_1+k_2+1}$ such that $x_{i+1}\in F(x_i)$ for each 
    $i\in \{k_1+1,\ldots, k_1+k_2\}$. Inductively, we define an orbit $(x_n)_{n=1}^{\infty}\in \mathcal{O}_F(x)$ and a sequence of positive integers
    $k_1, k_2, \ldots$ such that $x_{k_1+\cdots +k_{m}+1}\in B_m$ for each $m\in \mathbb N$. Thus, $(x_n)_{n=1}^{\infty}$ is dense.
    Therefore, $F$ is dense minimal.
\end{proof}

\begin{corollary}
    Let $X$ be a compactum and let $F\colon X\to 2^X$ be an upper semicontinuous
function. If $F$ is weak dense minimal, then $F$ is transitive.
\end{corollary}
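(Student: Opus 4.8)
The plan is to unwind the two definitions and observe that the conclusion falls out essentially immediately, so I would not invoke any of the deeper machinery of the section. First I would fix an arbitrary pair of nonempty open subsets $U$ and $V$ of $X$; to verify transitivity I must produce some $x\in U$ and some $k\in\mathbb{N}$ with $F^k(x)\cap V\neq\emptyset$. The key observation is that the definition of transitivity grants complete freedom in choosing the base point inside $U$: it only asks for \emph{some} point of $U$ to work. Since $U$ is nonempty, I may simply pick any $x\in U$.

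Having fixed such an $x$, I would then apply the weak dense minimality hypothesis to this single point. By assumption every point of $X$ has a weak dense orbit, so in particular $x$ does; applying the definition of weak dense orbit to the open set $V$ yields a $k\in\mathbb{N}$ with $F^k(x)\cap V\neq\emptyset$. This is exactly the pair $(x,k)$ demanded by the definition of transitivity, so $F$ is transitive.

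The only ingredient actually used is that $U$ is nonempty, so that a base point $x\in U$ exists; no hypothesis on isolated points is required. This is worth contrasting with the related implication in Proposition~\ref{proptgf6r}, where the passage between the three conditions goes through the existence of a point with a dense orbit and relies on $X$ having no isolated points. An alternative route would be to first invoke Theorem~\ref{theodminimal} to upgrade weak dense minimality to dense minimality, thereby producing a point with a dense orbit, and then apply the implication $\textit{1}\Rightarrow\textit{2}$ of Proposition~\ref{proptgf6r}; however, that path reintroduces the no-isolated-points assumption and is longer, so the direct argument above is both cleaner and more general.

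There is essentially no obstacle here: the entire content is recognizing that the universal quantifier over base points in the definition of weak dense minimality matches the existential quantifier over base points in the definition of transitivity. The one point to state carefully is the selection of an arbitrary $x\in U$, which is legitimate precisely because $U$ is nonempty.
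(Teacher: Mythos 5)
Your proof is correct, and it takes a genuinely different --- and in fact shorter and more general --- route than the paper. The paper first invokes Theorem~\ref{theodminimal} to upgrade weak dense minimality to dense minimality, and then applies the implication \textit{1}$\Rightarrow$\textit{2} of Proposition~\ref{proptgf6r}. That detour costs something: Proposition~\ref{proptgf6r} is stated for compacta with no isolated points, a hypothesis the corollary itself does not carry, so the paper's two-step argument literally covers only that case (and Theorem~\ref{theodminimal} itself requires constructing a genuine dense orbit by concatenating finite segments). Your direct argument --- pick any $x\in U$, apply the weak dense orbit property of $x$ to the open set $V$ to obtain $k\in\mathbb{N}$ with $F^k(x)\cap V\neq\emptyset$ --- uses only the two definitions and works for an arbitrary compactum. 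You correctly identify the heart of the matter: the existential quantifier over base points in the definition of transitivity is instantly discharged by the universal quantifier in weak dense minimality. The only point that needs stating carefully is the one you already flag, namely that $U$ is nonempty so a base point exists; nothing else is required.
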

\begin{proof}
    If $F$ is weak dense minimal, then $F$ is dense minimal, by Theorem \ref{theodminimal}.
    Thus, $F$ is transitive, by Proposition \ref{proptgf6r}.
\end{proof}

\section{Sensitivity}\label{Sensitive}

In this section we use the following notation: if $X$ is a metric space with metric $d$, $A\subseteq X$ and $r>0$, then $\mathcal{V}_r(A)=\{x\in X : d(x,a)<r \text{ for some }a\in A\}$. We write $\mathcal{V}_r(a)$ whenever $A=\{a\}$ for some $a\in X$.

Let $X$ be a continuum with metric $d$. 
A continuous function $f\colon X\to X$ is called \textit{sensitive 
in the sense of Devaney}, provided that there exists $\varepsilon>0$ 
such that for each $x\in X$ and any $\delta>0$ there is $y\in \mathcal{V}_{\delta}(x)$ 
where $d(f^m(x),f^m(y))\geq \varepsilon$ for some $m\in\mathbb N$. 
%To prevent any confusion with the following definitions, we will state that $f$ is sensitive in a usual sense. 

\medskip

Given an upper semicontinuous function $F\colon X\to 2^X$. We say that:

\begin{enumerate}
    \item $F$ is \textit{strongly sensitive} provided that there exists $\varepsilon>0$ 
    such that for each $x\in X$ and $\delta>0$, there exist $y\in \mathcal{V}_{\delta}(x)$ and $m\in \mathbb N$ such that 
    $$F^m(y)\nsubseteq \mathcal{V}_{\varepsilon}(F^m(x)).$$
    
    \item $F$ is \textit{sensitive} provided that there exists $\varepsilon>0$ 
    such that for each $x\in X$ and $\delta>0$, there exist $y\in\V_\delta(x)$ and $m\in \mathbb N$ such that 
    $$\HH (F^m(x),F^m(y))\geq \varepsilon.$$
    \item $F$ is \textit{weakly sensitive} provided that there exists $\varepsilon>0$ 
    such that for each $x\in X$ and $\delta>0$, there exist $y\in\V_\delta(x)$, $\overline{x}\in\mathcal{O}_F(x)$, $\overline{y}\in\mathcal{O}_F(y)$ and $m\in \mathbb N$ such that 
    $$d (\pi_m(\overline{x}), \pi_m(\overline{y})\geq \varepsilon.$$
\end{enumerate}

Next proposition follows from definitions.

\begin{proposition}\label{prop8uy61}
    Let $F\colon X\to 2^X$ be an upper semicontinuous function. Given the statements:
    \begin{enumerate}
        \item $F$ is strongly sensitive;
        \item $F$ is sensitive;
        \item $F$ is weakly sensitive.
    \end{enumerate}
    Then, \textit{1} implies \textit{2}, and \textit{2} implies \textit{3}.
\end{proposition}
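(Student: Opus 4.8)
The plan is to verify each implication directly from the definitions, preserving the witnessing constant $\varepsilon$ from one notion to the next, and using two elementary facts: first, for a point $z$ and a compact set $A$, one has $z\notin\mathcal{V}_\varepsilon(A)$ if and only if $d(z,A):=\inf_{a\in A}d(z,a)\geq\varepsilon$; and second, the Hausdorff metric satisfies $\mathcal{H}(A,B)=\max\{\sup_{a\in A}d(a,B),\ \sup_{b\in B}d(b,A)\}$ for nonempty compact $A,B$.

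For the implication \textit{1}$\Rightarrow$\textit{2}, I would fix the $\varepsilon>0$ furnished by strong sensitivity and show it also witnesses sensitivity. Given $x\in X$ and $\delta>0$, strong sensitivity produces $y\in\mathcal{V}_\delta(x)$ and $m\in\mathbb{N}$ with $F^m(y)\nsubseteq\mathcal{V}_\varepsilon(F^m(x))$. By the first fact, there is a point $z\in F^m(y)$ with $d(z,F^m(x))\geq\varepsilon$, whence $\sup_{b\in F^m(y)}d(b,F^m(x))\geq\varepsilon$ and therefore $\mathcal{H}(F^m(x),F^m(y))\geq\varepsilon$. This is exactly the defining condition for sensitivity, so this step is entirely routine.

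For the implication \textit{2}$\Rightarrow$\textit{3}, I again keep the same $\varepsilon$. Given $x$ and $\delta$, sensitivity yields $y\in\mathcal{V}_\delta(x)$ and $m$ with $\mathcal{H}(F^m(x),F^m(y))\geq\varepsilon$. The key observation is that for nonempty compact sets this forces a pair of points at distance at least $\varepsilon$: assuming, say, $\sup_{a\in F^m(x)}d(a,F^m(y))\geq\varepsilon$, compactness of $F^m(x)$ (it lies in $2^X$) lets this supremum be attained at some $u\in F^m(x)$ with $d(u,F^m(y))\geq\varepsilon$, so that $d(u,v)\geq\varepsilon$ for every $v\in F^m(y)$; choosing any $v\in F^m(y)$ then gives $u\in F^m(x)$ and $v\in F^m(y)$ with $d(u,v)\geq\varepsilon$ (the symmetric case is identical).

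The one point requiring care is passing from the sets $F^m$ to actual orbit coordinates, and this is where the main (though modest) obstacle lies: the index bookkeeping. By Lemma~\ref{lem0} we have $\pi_{m+1}(\mathcal{O}_F(z))=F^m(z)$, so $u$ and $v$ are realized as $(m+1)$-th coordinates of genuine orbits; that is, there exist $\overline{x}\in\mathcal{O}_F(x)$ with $\pi_{m+1}(\overline{x})=u$ and $\overline{y}\in\mathcal{O}_F(y)$ with $\pi_{m+1}(\overline{y})=v$, these orbits existing because $F(w)\neq\emptyset$ for all $w$ so every admissible finite segment extends. Then $d(\pi_{m+1}(\overline{x}),\pi_{m+1}(\overline{y}))=d(u,v)\geq\varepsilon$, witnessing weak sensitivity with index $m+1$. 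The shift from $\pi_m$ to $\pi_{m+1}$, dictated by $\pi_k=F^{k-1}$, is the only genuinely non-cosmetic detail, and it is precisely what Lemma~\ref{lem0} supplies.
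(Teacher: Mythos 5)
Your proof is correct and takes the same direct-from-definitions route the paper intends: the paper states only that the proposition ``follows from definitions'' and omits all details, which you have filled in accurately. In particular, your handling of the only delicate point --- attaining the supremum by compactness to extract $u\in F^m(x)$, $v\in F^m(y)$ with $d(u,v)\geq\varepsilon$, and then realizing them as $(m+1)$-st coordinates of genuine orbits via Lemma~\ref{lem0} with the attendant index shift --- is exactly right.
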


Note that given a continuum $X$ and a constant map $F\colon X\to 2^X$ defined by $F(x)=X$ for each $x\in X$, we have that $F$ is weakly sensitive, but it is not sensitive. Next example shows two functions: a weakly sensitive but not sensitive, and a sensitive for which it is not strongly sensitive.

\begin{lemma}\label{lem1}
    Let $F\colon X\to 2^X$ be an upper semicontinuous function. 
    If there exists a sensitive map, $f\colon X\to X$, in the sense of Devaney such that $f^n(x)\in F^n(x)$ for each $x\in X$ and all $n\in\mathbb N$, then $F$ is weakly sensitive.
\end{lemma}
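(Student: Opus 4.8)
The plan is to produce, for every point $x\in X$, a distinguished orbit in $\mathcal{O}_F(x)$ built from the iterates of $f$, and then to read off weak sensitivity of $F$ directly from the Devaney sensitivity of $f$.

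First I would note that although the hypothesis is stated as $f^n(x)\in F^n(x)$ for all $n\in\mathbb N$ and all $x\in X$, the case $n=1$ already yields $f(x)\in F(x)$ for every $x\in X$, and this single condition is what actually drives the argument. From it, an easy induction shows that for each $x\in X$ the sequence $\overline{x}^f=(x,f(x),f^2(x),\ldots)$ belongs to $\mathcal{O}_F(x)$: one has $\pi_1(\overline{x}^f)=x$ and, for each $k\in\mathbb N$, $\pi_{k+1}(\overline{x}^f)=f^k(x)=f(f^{k-1}(x))\in F(f^{k-1}(x))=F(\pi_k(\overline{x}^f))$, so the sequence satisfies the orbit condition $\pi_{k+1}\in F(\pi_k)$.

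Next I would fix a Devaney-sensitivity constant $\varepsilon>0$ for $f$ and claim that the same $\varepsilon$ witnesses weak sensitivity of $F$. Given $x\in X$ and $\delta>0$, sensitivity of $f$ provides $y\in\mathcal{V}_\delta(x)$ and $m\in\mathbb N$ with $d(f^m(x),f^m(y))\geq\varepsilon$. Taking $\overline{x}=\overline{x}^f\in\mathcal{O}_F(x)$ and $\overline{y}=\overline{y}^f\in\mathcal{O}_F(y)$ from the previous step, and using $\pi_{m+1}(\overline{x}^f)=f^m(x)$ together with $\pi_{m+1}(\overline{y}^f)=f^m(y)$, one obtains $d(\pi_{m+1}(\overline{x}),\pi_{m+1}(\overline{y}))=d(f^m(x),f^m(y))\geq\varepsilon$, which is exactly the defining condition for weak sensitivity (with the index $m+1$).

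There is no serious obstacle in this argument; it is essentially a matter of transporting the sensitivity witness from $f$ to $F$ along the canonical embedding of $f$-trajectories into orbit sets. The only two points requiring care are the observation that the full hypothesis collapses to $f(x)\in F(x)$ for all $x$, and the bookkeeping of the index shift induced by the convention that $\pi_k$ extracts the $k$-th coordinate while $f^m$ is the $m$-th iterate (hence $\pi_{m+1}(\overline{x}^f)=f^m(x)$).
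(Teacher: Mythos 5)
Your proof is correct and follows essentially the same route as the paper: take $\overline{x}=(x,f(x),f^2(x),\ldots)$ and $\overline{y}=(y,f(y),f^2(y),\ldots)$ as the distinguished orbits, and transport the Devaney sensitivity constant of $f$ directly to $F$ via $\pi_{m+1}(\overline{x})=f^m(x)$. Your added observation that only the $n=1$ case $f(z)\in F(z)$ (applied at each point $f^{k-1}(x)$) is needed to verify $\overline{x}\in\mathcal{O}_F(x)$ is a correct and slightly more careful justification than the paper's, which invokes the full hypothesis without spelling this out.
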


\begin{proof}
    Since $f$ is sensitive in the sense of Devaney, there exists $\varepsilon>0$ satisfying the definition. 
    Let $x\in X$ and let $\delta>0$. 
    Then we have that there exist $y\in \V_{\delta}(x)$ and $m\in\mathbb N$ such that $d(f^{m}(x),f^{m}(y))\geq\varepsilon$. 
    Let $\overline{x}=\{x,f(x),f^2(x),\ldots\}$ and $\overline{y}=\{y,f(y),f^2(y),\ldots\}$. 
    Note that,  since $f^n(x)\in F^n(x)$, for each $x\in X$ and all $n\in\mathbb N$, $\overline{x}\in \mathcal{O}_F(x)$ and $\overline{y}\in \mathcal{O}_F(y)$. 
    Hence, $d(\pi_{m+1}(\overline{x}),\pi_{m+1}(\overline{y}))=d(f^m(x),f^m(y))\geq \varepsilon$. 
    Therefore, $F$ is weakly sensitive.
\end{proof}

\begin{example}\label{ex00}
    Let $T\colon [0,1]\to [0,1]$ be the tent map defined for each $t\in [0,1]$ by $$T(t)=\begin{cases}
        2t, &\text{ if }t\in [0,\frac{1}{2}];\\
        2(1-t), &\text{ if }t\in [\frac{1}{2},1].
    \end{cases}$$
    It is well known that $T$ is sensitive in the sense of Devaney, and there exists $t_0\in [0,1]$ such that $\{t_0, T(t_0), T^{2}(t_0),\ldots\}$ is a dense subset of $[0,1]$.
    Let $F,G\colon [0,1]\to 2^{[0,1]}$ be given by:
    $$F(t)=\begin{cases}
        [0,1], &\text{ if }t=0;\\
        \{T(t)\}, &\text{ if }t\in (0,1],
    \end{cases} \ \ \text{ and } G(t)=\{t_0,T(t)\}.$$
    
    \begin{affirmation}\label{aff1}
      $F$ is sensitive but it is not strongly sensitive.
    \end{affirmation}
    
    Note that $F^k(0)=[0,1]$ for each $k\in \mathbb N$. Hence, $F^k(y)\subseteq \V_{\varepsilon}(F^k(0))$ for all $k\in\mathbb N$ and each $\varepsilon>0$ and $y\in [0,1]$. Therefore, $F$ is not strongly sensitive. We see that $F$ is sensitive. Since $T$ is sensitive in the sense of Devaney, there exists $\varepsilon>0$ satisfying the definition. We may assume that $\varepsilon<\frac{1}{2}$. Let $x\in [0,1]$ and let $\delta >0$. Observe that if $x=0$, then $\HH (\{T(y), [0,1]\})\geq \frac{1}{2}>\varepsilon$. Hence, we suppose that $x\neq 0$. Since $T$ is sensitive in the sense of Devaney, there exist $y\in \V_{\delta}(x)$ and $m\in\mathbb N$ such that $d(T^m(x),T^m(y))\geq \varepsilon$. We consider three cases:

    \textbf{Case 1.} $F^m(x)=\{T^m(x)\}$ and $F^m(y)=\{T^m(y)\}$. 
    
    In this case, $$\HH (F^m(x),F^m(y))=d(T^m(x),T^m(y))\geq \varepsilon.$$

    \textbf{Case 2.} $F^m(x)=[0,1]$ and $F^m(y)=\{T^m(y)\}$ (or equivalently, $F^m(x)=\{T^m(x)\}$ and $F^m(y)=[0,1]$). 
    
    Then 
    $$\HH (F^m(x),F^m(y))=\HH ([0,1],\{T^m(y)\})\geq\frac{1}{2}>\varepsilon.$$

    \textbf{Case 3.} $F^m(x)=[0,1]$ and $F^m(y)=[0,1]$. 

    In this situation, 
    there exist $r,s< m$ such that $T^r(x)=0$ and $T^s(y)=0$. Since $d(T^m(x),T^m(y))\geq \varepsilon$, we have that $r\neq s$. Suppose that $s<r$ and $r=\min \{i : T^{i}(y)=0\}$. Thus, $F^r(x)=[0,1]$ and $F^r(y)=\{0\}$. Therefore, $\HH (F^r(x),F^r(y))=\HH ([0,1],\{0\})=1\geq\varepsilon$. We proved Affirmation~\ref{aff1}.

    \begin{affirmation}
        $G$ is weakly sensitive but it is not sensitive.
    \end{affirmation}

    Observe that $\{t_0, T(t_0), \ldots, T^{k-1}(t_0)\}\subseteq G^k(x)$, for each $x\in [0,1]$ and every $k\in\mathbb N$. 
    We show that $G$ is not sensitive. Let $\varepsilon>0$. 
    Since $\{t_0, T(t_0), T^{2}(t_0),\ldots\}$ is a dense subset of $[0,1]$, there exists $m\in\mathbb N$ such that $\HH ([0,1],\{t_0, T(t_0),\ldots , T^{m-1}(t_0)\})\leq \frac{\varepsilon}{2}$. 
    Therefore, 
    $$\HH(G^l(x),G^l(y))\leq \HH(G^l(x),[0,1])+\HH([0,1], G^l(y))\leq \frac{\varepsilon}{2}+\frac{\varepsilon}{2}=\varepsilon, \text{ for each }l\geq m.$$ 
    Let $x\in [0,1]$. 
    Since $T, T^2,\ldots,T^{m}$ are maps, there exists $\delta>0$ such that $d(T^{i}(x),T^{i}(y))<\varepsilon$, for each $i\in\{1,\ldots,m\}$ and $y\in \V_{\delta}(x)$. 
    Note that $\HH(G^{i}(x),G^{i}(y))\leq d(T^{i}(x),T^{i}(y))$, for all $i\in\{1,\ldots,m\}$. 
    Therefore, $\HH (G^j(x), G^j(y))<\varepsilon$, for every $y\in \V_{\delta}(x)$ and all $j\in\mathbb N$. 
    Therefore, $G$ is not sensitive.
    Finally, $G$ is weakly sensitive by Lemma~\ref{lem1}.
\end{example}

An upper semicontinuous function $F\colon X\to 2^X$ is {\it Li-Yorke sensitive}
provided there exists an
$\varepsilon>0$ with the property
that any neighbourhood of
any $x\in X$ contains a point $y$ proximal to $x$,
such that trajectories of $x$ and $y$ are separated
by $\varepsilon$ for infinitely many times. Thus, for each $x\in X$ and $\delta>0$ there exists $y\in \V_{\delta}(x)$ such that 
$$\liminf_{n\to\infty}\HH(F^n(x),F^n(y))=0\ \hbox{and}\ 
\limsup_{n\to\infty} \HH(F^n(x),F^n(y))>\varepsilon.$$ 
\begin{proposition}\label{propbshdgst}
    Let $F\colon X\to 2^X$ be an upper semicontinuous function. If $F$ is Li-Yorke sensitive, then $F$ is sensitive. 
\end{proposition}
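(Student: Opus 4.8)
The plan is to prove the statement directly from the two definitions, retaining the \emph{same} sensitivity constant $\varepsilon$ that Li-Yorke sensitivity supplies. The only substantive point is the elementary observation that a strict inequality on a $\limsup$ already guarantees a single time at which that inequality holds.

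First I would fix $\varepsilon>0$ witnessing that $F$ is Li-Yorke sensitive. Given an arbitrary $x\in X$ and $\delta>0$, Li-Yorke sensitivity produces a point $y\in\V_\delta(x)$ with
$$\liminf_{n\to\infty}\HH(F^n(x),F^n(y))=0 \quad\text{and}\quad \limsup_{n\to\infty}\HH(F^n(x),F^n(y))>\varepsilon.$$
The $\liminf$ clause plays no role in establishing sensitivity, so I would simply discard it and work only with the $\limsup$ inequality.

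Next I would unwind that $\limsup$ condition. Abbreviating $a_n=\HH(F^n(x),F^n(y))$, the inequality $\limsup_{n\to\infty}a_n>\varepsilon$ means, by the very definition of limit superior, that $a_n>\varepsilon$ for infinitely many indices $n$; in particular there is at least one $m\in\mathbb N$ with $\HH(F^m(x),F^m(y))=a_m>\varepsilon\ge\varepsilon$. Since $x\in X$ and $\delta>0$ were arbitrary, this exhibits the same $\varepsilon$ together with, for each such pair $(x,\delta)$, a point $y\in\V_\delta(x)$ and an index $m\in\mathbb N$ satisfying $\HH(F^m(x),F^m(y))\ge\varepsilon$. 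This is exactly the definition of $F$ being sensitive, so the proof concludes.

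I do not anticipate any genuine obstacle: the argument is essentially a one-line deduction once the definition of $\limsup$ is invoked. The only detail deserving a moment's care is the passage from the strict inequality $\limsup>\varepsilon$ of the hypothesis to the non-strict requirement $\HH(F^m(x),F^m(y))\ge\varepsilon$ in the definition of sensitivity; but since a strict excess over $\varepsilon$ occurring at infinitely many times certainly yields at least one time with value $\ge\varepsilon$, the identical constant $\varepsilon$ works, with no need to shrink it.
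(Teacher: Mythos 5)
Your proof is correct and follows essentially the same route as the paper's: both fix the $\varepsilon$ from Li-Yorke sensitivity, discard the $\liminf$ clause, and extract from $\limsup_{n\to\infty}\HH(F^n(x),F^n(y))>\varepsilon$ a single index $m$ with $\HH(F^m(x),F^m(y))\geq\varepsilon$. Your extra remark about passing from the strict inequality to the non-strict one is a fair point of care that the paper glosses over, but the argument is identical in substance.
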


\begin{proof}
    Let $\varepsilon>0$ be given by the definition of Li-Yorke sensitivity. Let $x\in X$ and let $\delta>0$. Then there exists $y\in\V_{\delta}(x)$ such that  $\limsup_{n\to\infty} \HH(F^n(x),F^n(y))>\varepsilon$. Thus, there exists $m\in \mathbb N$ such that $\HH(F^m(x),F^m(y))\geq \varepsilon$. Therefore, $F$ is sensitive.
\end{proof}

\begin{example}
    Let $F\colon [0,1]\to 2^{[0,1]}$ be the function defined in Example~\ref{ex00}. We know that $F$ is sensitive. We prove that $F$ is not Li-Yorke sensitive.
\medskip

Let $\varepsilon>0$, let $\delta>0$, and let $y\in [0,\delta)$. We consider two cases:
\medskip

\textbf{Case 1.} $F^m(y)=[0,1]$ for some $m\in\mathbb N$. 

Note that $\HH(F^j(0),F^j(y))=0$, for each $j\geq m$. Thus, $\limsup_{n\to\infty} \HH(F^n(x),F^n(y))\leq\varepsilon$.
\vspace{0.5cm}

\textbf{Case 2.} $F^m(y)=\{T^m(y)\}$ for each $m\in\mathbb N$. 

We have that $\HH(F^j(0),F^j(y))\geq \frac{1}{2}$, for all $j\in\mathbb N$. Thus, $\liminf_{n\to\infty}\HH(F^n(x),F^n(y))\neq 0$.
\vspace{0.04cm}

Therefore, $F$ is not Li-Yorke sensitive.
\end{example}

%{\red
%\begin{theorem}
%Let $X$ be a compactum and let $F\colon X\to 2^X$ be an upper semicontinuous
%function. If $F$ is weak dense minimal, then $F$ is transitive.
%\end{theorem}

%\begin{proof}
%Let $U$ and $V$ open subsets of $X$ and let $x\in U$. Since $F$ is weak dense
%minimal, $x$ has a weak dense orbit. Hence, 
%there exists $k\in\mathbb{N}$ such that $F^k(x)\cap V\not=\emptyset$.
%Therefore, $F$ is transitive.
%\end{proof}
%}

{\bf Acknowledgement.} 
The first named author thanks the support given 
by 
\textit{La Vicerrector\'{\i}a de Investigaci\'on y Extensi\'on 
de la Universidad Industrial 
de Santander y su Programa de Movilidad}. The second named
author thanks the financial support provided by Proyecto 4247 VIE-UIS of \textit{La Vicerrector\'{\i}a de Investigaci\'on y Extensi\'on 
de la Universidad Industrial 
de Santander}.

\noindent (J. Amorocho, J. Camargo)\\   
Escuela de Matem\'aticas, Facultad de Ciencias,\\ 
Universidad Industrial de Santander, Ciudad Universitaria,\\
Carrera 27 Calle 9, Bucaramanga,\\
Santander, A. A. 678, COLOMBIA.\\
e-mail: jeisonamorochom@gmail.com\\
e-mail: jcamargo@saber.uis.edu.co\\

\noindent (S. Mac\'{\i}as)\\
\noindent Instituto de Matem\'aticas,\\ 
Universidad Nacional Aut\'onoma de M\'exico,\\
Circuito Exterior, Ciudad Universitaria,\\
CDMX, C. P. 04510. M\'EXICO.\\ 
e-mail: sergiom@matem.unam.mx \\
e-mail: macias@unam.mx

\end{document}